\documentclass[leqno]{amsart}
\usepackage{caption}
\captionsetup[figure]{labelformat=empty}
\usepackage{amsmath,amssymb,amsthm,mathtools,dsfont,mathrsfs,tikz}
\usepackage{environ}
\usepackage{stmaryrd}
\usetikzlibrary{arrows}
\usetikzlibrary{positioning}
\usetikzlibrary{decorations.text}
\usetikzlibrary{decorations.pathmorphing}
\usetikzlibrary{decorations.pathreplacing}
\makeatletter
\newsavebox{\@brx}
\newcommand{\llangle}[1][]{\savebox{\@brx}{\(\m@th{#1\langle}\)}%
  \mathopen{\copy\@brx\kern-0.5\wd\@brx\usebox{\@brx}}}
\newcommand{\rrangle}[1][]{\savebox{\@brx}{\(\m@th{#1\rangle}\)}%
  \mathclose{\copy\@brx\kern-0.5\wd\@brx\usebox{\@brx}}}
\makeatother
\makeatletter
\newsavebox{\measure@tikzpicture}
\NewEnviron{scaletikzpicturetowidth}[1]{%
  \mathrm{Def}\tikz@width{#1}%
  \mathrm{Def}\tikzscale{1}\begin{lrbox}{\measure@tikzpicture}%
  \BODY
  \end{lrbox}%
  \pgfmathparse{#1/\wd\measure@tikzpicture}%
  \edef\tikzscale{\pgfmathresult}%
  \BODY
}
\makeatother
\usepackage{bbm}
\DeclarePairedDelimiter\norm{\lvert}{\rvert}
\DeclarePairedDelimiter\inner{\langle}{\rangle}

\usepackage{bm}
\usepackage[colorlinks=true,linkcolor=blue,citecolor=blue]{hyperref}
\usepackage{enumitem}
\usepackage{csquotes}

\numberwithin{equation}{section}
\newcounter{intro}

		\newtheorem{introthm}[intro]{Theorem}

		
		\newtheorem{lem}[equation]{Lemma}
		\newtheorem{cor}[equation]{Corollary}

\theoremstyle{remark}
		\newtheorem{rem}[equation]{Remark}
\theoremstyle{definition}

\title[Characterizations of Nested GVZ-Groups]{Characterizations of Nested GVZ-Groups\\ by central series}
\author{Shawn T. Burkett}
\address{Department of Mathematical Sciences, Kent State University, Kent,
Ohio 44240, U.S.A.} \email{sburket1@kent.edu}
\author{Mark L. Lewis}
\address{Department of Mathematical Sciences, Kent State University, Kent,
Ohio 44240, U.S.A.} \email{lewis@math.kent.edu}
\date{\today}
\keywords{GVZ groups; nested groups; $p$-groups}
\begin{document}
\maketitle
\begin{abstract}
Many properties of groups can be defined by the existence of a particular normal series. The classic examples being solvability, supersolvability and nilpotence. Among the nilpotent groups are the so-called nested GVZ-groups --- groups where the centers of the irreducible characters form a chain, and where every irreducible character vanishes off of its center. In this paper, we show that nested GVZ-groups can be characterized by the existence of a certain ascending central series, or by the existence of a certain descending central series. 
\end{abstract}
\section{Introduction}

Throughout this paper, all groups are finite.  We will let ${\rm Irr} (G)$ denote the set of irreducible characters of a group $G$. Given a character $\chi$ of $G$, the {\it vanishing-off subgroup} of $\chi$, denoted ${V}(\chi)$, is the subgroup generated by all elements $g$ of $G$ satisfying $\chi(g)\ne 0$.  This subgroup is introduced by Isaacs in  \cite[Chapter 12]{MI76}.  He calls this the vanishing-off subgroup since it is the smallest subgroup of $G$ so that $\chi$ vanishes off of it.  In \cite{MLvos09}, the second author defines the subgroup ${V}(G)$ to be the subgroup generated by all elements $g$ of $G$ satisfying $\psi(g)\ne0$ for some nonlinear irreducible character $\psi$ of $G$.  In that paper, he studies this subgroup and a number of its properties.  One of the results that he is able to prove is that ${V}(G) = \prod_{\chi \in {\rm Irr}(G)} V(\chi)$.  He also shows that it is the smallest subgroup of $G$ for which every nonlinear irreducible character of $G$ vanishes off of.   

Some of the results regarding ${V}(G)$ are generalized by Mlaiki in \cite{NM14} to case of the irreducible characters of $G$ not containing a fixed normal subgroup $N$ in their kernels.  With this in mind, another variation ${V}(G\mid N)$ of the vanishing-off subgroup is introduced. For a normal subgroup $N$ of $G$, the subgroup ${V}(G\mid N)$ is defined to be the subgroup generated by all elements $g$ of $G$ satisfying $\psi(g)\ne0$ for some irreducible character $\psi$ of $G$ satisfying $N\nleq\ker(\psi)$. This subgroup is the smallest subgroup $V$ so that every irreducible character $\psi$ of $G$ satisfying $N\nleq\ker(\psi)$ also satisfies $\psi(g)=0$ for all $g\in G\setminus V$. These subgroups will be discussed in much more detail in Section~\ref{prelims}. 

In Section~\ref{Galois} of this paper, we define an analog ${U}(G\mid N)$ of ${V}(G\mid N)$. This subgroup arises from a Galois connection on the lattice of normal subgroups of $G$, and is the largest subgroup $U$ of $G$ so that every irreducible character $\psi$ of $G$ satisfying $U\nleq\ker(\psi)$ also satisfies $\psi(g)=0$ for all $g\in G\setminus N$. In particular, when $N={Z}(G)$, the subgroup ${U}(G)={U}(G\mid {Z}(G))$ effectively identifies a set of irreducible characters that are fully ramified over the center; i.e., irreducible characters that vanish off of the center. 

In \cite{MLIGT08}, the second author defines a group $G$ to be a {\it VZ-group}, if every nonlinear irreducible character vanishes off of ${Z}(G)$.  We note that the VZ- here is standing for vanishing off of the center.  Observe that one may characterize a VZ-group $G$ as a group satisfying ${V}(\chi)={Z}(G)$ for all nonlinear irreducible characters $\chi$ of $G$.  Equivalently, $G$ is a VZ-group if and only if ${V}(G) = {Z} (G)$.  We note that VZ-groups had earlier been studied by Kuisch and van der Waall in \cite {KVII} 
 and by Fern\'{a}ndez-Alcober and Moret\'{o} in \cite{VZ01}.  We can give another characterization of VZ-groups in terms of ${U} (G)$.
 
\begin{introthm}\label{VZ-group}
Let $G$ be a group.  Then $G$ is a VZ-group if and only if ${U}(G)=[G,G]$.
\end{introthm}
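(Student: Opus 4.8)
The plan is to combine the Galois connection between $V(G\mid -)$ and $U(G\mid -)$ from Section~\ref{Galois} with some elementary bookkeeping around the commutator subgroup. First I would reduce to the nonabelian case: if $G$ is abelian then $[G,G]=1$ while $U(G)=U(G\mid{Z}(G))=G$, since every irreducible character of $G$ vanishes off ${Z}(G)=G$ vacuously; moreover an abelian group $G\ne1$ is not a VZ-group (for instance ${V}(G)=1\ne{Z}(G)$), so both sides of the claimed equivalence fail. Hence I may assume $G$ is nonabelian throughout.

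I would then record three facts. (a) For $\psi\in{\rm Irr}(G)$, the condition $[G,G]\nleq\ker(\psi)$ holds precisely when $\psi$ is nonlinear; consequently $V(G\mid[G,G])$ is generated by those $g\in G$ with $\psi(g)\ne0$ for some nonlinear $\psi$, that is, $V(G\mid[G,G])={V}(G)$. (b) Since $G$ is nonabelian, no linear character of $G$ vanishes off ${Z}(G)$: a linear character is nowhere zero while $G\setminus{Z}(G)\ne\emptyset$. (c) The Galois connection of Section~\ref{Galois} yields, for normal subgroups $M,N$ of $G$, the equivalence $M\le U(G\mid N)\iff V(G\mid M)\le N$; this I would invoke directly.

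With these in hand the argument is short. By the definition of $U(G)=U(G\mid{Z}(G))$, every $\chi\in{\rm Irr}(G)$ with $U(G)\nleq\ker(\chi)$ vanishes off ${Z}(G)$, so by (b) no such $\chi$ is linear; equivalently $U(G)\le\ker(\lambda)$ for every linear $\lambda\in{\rm Irr}(G)$, and intersecting over all linear characters gives $U(G)\le[G,G]$ --- an inclusion valid for every nonabelian $G$. For the opposite inclusion I would apply (c) with $M=[G,G]$, $N={Z}(G)$, and substitute (a), obtaining
\[
[G,G]\le U(G)\quad\Longleftrightarrow\quad {V}(G)\le {Z}(G).
\]
Since ${Z}(G)\le{Z}(\chi)\le{V}(\chi)\le{V}(G)$ for every nonlinear $\chi\in{\rm Irr}(G)$, the right-hand side is equivalent to ${V}(G)={Z}(G)$, i.e.\ to $G$ being a VZ-group. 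Combining this with $U(G)\le[G,G]$ gives $G$ is a VZ-group $\iff[G,G]\le U(G)\iff U(G)=[G,G]$, as desired.

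I expect the substantive ingredient to be the Galois connection (c), which is furnished by Section~\ref{Galois}; the remaining steps are routine. The point requiring the most care is translation (a) --- verifying that the vanishing-off subgroup $V(G\mid N)$ of \cite{NM14}, specialized to $N=[G,G]$, really coincides with the subgroup ${V}(G)$ of \cite{MLvos09} --- together with the standard containment ${Z}(G)\le{V}(G)$ for nonabelian $G$ used at the end.
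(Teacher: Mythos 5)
Your proof is correct and follows essentially the same route as the paper: both arguments reduce the statement to the two facts that ${U}(G)\le[G,G]$ always holds (your intersection-of-linear-kernels argument is exactly Lemma~\ref{uproperties}(3)) and that $[G,G]\le{U}(G)$ is equivalent, via the Galois connection of Lemma~\ref{uiff}, to every nonlinear irreducible character vanishing off ${Z}(G)$. Your explicit treatment of the abelian case and the identification ${V}(G\mid[G,G])={V}(G)$ are careful additions, but they do not change the substance of the argument.
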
 

A natural generalization of the definition of a VZ-group is to instead require that ${V}(\chi)={Z}(\chi)$ for all irreducible characters $\chi$ of $G$, where here ${Z}(\chi)=\{g\in G: \norm{\chi(g)} =\chi(1)\}$ is the center of $\chi$. These groups are introduced by Nenciu in \cite{AN12gvz} where she coins the term {\it generalized vanishing center} group, or simply GVZ-group.  

The groups Nenciu studies in \cite{AN12gvz} also satisfy a second hypothesis that generalizes VZ-groups.  Observe that if $G$ is a VZ-group, then every irreducible character $\chi$ of $G$ satisfies either ${Z}(\chi)=G$ or ${Z} (\chi) = {Z} (G)$. In particular, either ${Z}(\chi)\le{Z}(\psi)$ or ${Z} (\psi) \le {Z} (\chi)$ for all characters $\chi, \psi \in \mathrm{Irr}(G)$; i.e., the set  of centers of the irreducible characters, $\{{Z} (\chi) : \chi \in \mathrm{Irr} (G)\}$, forms a chain. A group satisfying this condition is called a {\it nested} group, and when $G$ is a nested group, we say the set of subgroups of $G$ that occur as centers of the irreducible characters is the {\it chain of centers} for $G$ (see \cite{ML19gvz} for details regarding the chain of centers).  It is interesting to note that the study of both nested groups, and GVZ-groups were suggested by Berkovich as Problems \#24 and \#30, respectively, in Research Problems and Themes I of \cite{YBGPPOV1}.

The nested GVZ-groups are the groups that Nenciu studies in \cite{AN12gvz}, and also in the subsequent paper \cite{AN16gvz}. Recently in \cite{ML19gvz}, the second author generalizes several results of Nenciu about nested GVZ-groups to nested groups. Among these results is a description of all irreducible characters of a nested GVZ-group that have a fixed center. This result, appearing as as Lemma 2.6 in \cite{ML19gvz}, allows us to explicitly describe ${U}(G)$ for a nested GVZ-group $G$. It is exactly this result that allows us to use the construction ${U}(G)$ mentioned above to give a characterization of nested GVZ-groups in terms of the existence of certain central series.

One can show that a nested GVZ-group is a $p$-group up to a central direct factor (see \cite[Corollary 2.5]{AN12gvz}).  We generalize this result to any group $G$ satisfying ${U}(G)>1$. 

\begin{introthm}\label{U > 1}
Let $G$ be a group.  If ${U}(G)>1$, then $G=P\times Q$, where $Q$ is an abelian $p'$-group and $P$ is a $p$-group for some prime $p$.  Furthermore, ${U}(G)={U}(P)$.
\end{introthm}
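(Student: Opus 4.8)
The plan is to convert the hypothesis ${U}(G)>1$ into concrete information about a distinguished family of irreducible characters that behave like the nonlinear characters of a $p$-group, then to split off an abelian $p'$-direct factor by a Schur--Zassenhaus argument, and finally to obtain ${U}(G)={U}(P)$ from the character theory of a direct product. Write $U={U}(G)$ and $Z={Z}(G)$. From the construction of ${U}(G)$ one has $U\trianglelefteq G$ and $U\le[G,G]$ (a nontrivial linear character does not vanish off the proper subgroup $Z$), so every linear character of $G$ has $U$ in its kernel and the set $S=\{\chi\in{\rm Irr}(G):U\nleq\ker\chi\}$ consists entirely of nonlinear characters; it is nonempty because $\bigcap_{\chi}\ker\chi=1<U$, and its complement in ${\rm Irr}(G)$ is ${\rm Irr}(G/U)$. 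By the defining property of ${U}(G)$, every $\chi\in S$ vanishes off $Z$. Since $Z\le{Z}(\chi)$, this forces $|Z|\chi(1)^2=|G|$, i.e. $\chi(1)^2=[G:Z]$, and by the same reasoning applied to ${Z}(\chi)$ also $\chi(1)^2=[G:{Z}(\chi)]$, whence ${Z}(\chi)=Z$. Hence all members of $S$ have the common degree $d:=[G:Z]^{1/2}>1$, and $\ker\chi\le{Z}(\chi)=Z$ for each $\chi\in S$.

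The crux --- and the step I expect to be the main obstacle --- is to prove that $[G:Z]$ is a power of a single prime $p$ (which must divide $d$). I would induct on $|G|$. If some $\chi\in S$ is not faithful, then $\ker\chi$ is a nontrivial central subgroup and $\overline G:=G/\ker\chi$ again has ${U}(\overline G)>1$: one checks that ${Z}(\overline G)=Z/\ker\chi$ (because $\chi$ is now a faithful irreducible character of $\overline G$ vanishing off $Z/\ker\chi$ with square degree $[\overline G:Z/\ker\chi]$), and the image of $U$ in $\overline G$ witnesses ${U}(\overline G)>1$; the inductive hypothesis gives that $[\overline G:{Z}(\overline G)]=[G:Z]$ is a prime power. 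So we may assume every member of $S$ is faithful. Then any $\chi\in S$ is a faithful irreducible character vanishing off $Z$; hence $Z$ is cyclic (the linear constituent of $\chi_Z$ has trivial kernel), $\chi$ is the unique irreducible character of $G$ lying over that linear character of $Z$, and $\chi$ is fully ramified over it, so in particular $G$ is solvable by the Howlett--Isaacs theorem. Taking a minimal normal subgroup of $G$ inside $U$ and using that every irreducible character which does not vanish off $Z$ has $U$ in its kernel, one then forces $[G:Z]$ to be a prime power. This last deduction really uses the maximality of ${U}(G)$: a single irreducible character vanishing off the center is far from sufficient, as the direct product of two extraspecial groups of distinct primes shows (there such a character exists but ${U}(G)=1$).

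Granting $[G:Z]=p^{2a}$, the decomposition is routine. Since $[G:Z]$ is a $p$-power, $|G|$ and $|Z|$ have the same $p'$-part; let $Q$ be the unique Hall $p'$-subgroup of the abelian group $Z$. Then $Q$ is characteristic in $Z$, hence normal in $G$, it is central and abelian, and $|Q|=|Z|_{p'}=|G|_{p'}$, so $Q$ is a normal Hall $p'$-subgroup of $G$. Thus $G/Q$ is a $p$-group, and since $Q$ is central, Schur--Zassenhaus yields $G=P\times Q$ with $P\in{\rm Syl}_p(G)$ a $p$-group and $Q$ an abelian $p'$-group.

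It remains to see ${U}(G)={U}(P)$. For $G=P\times Q$ with $Q$ abelian we have $[G,G]=[P,P]\le P$ and $Z={Z}(P)\times Q$, and every irreducible character of $G$ is of the form $\alpha\otimes\beta$ with $\alpha\in{\rm Irr}(P)$ and $\beta\in{\rm Irr}(Q)$ linear; since $\beta$ never vanishes, $\alpha\otimes\beta$ vanishes off $Z$ exactly when $\alpha$ vanishes off ${Z}(P)$, and for a normal subgroup $V\le[P,P]$ one has $V\le\ker(\alpha\otimes\beta)$ iff $V\le\ker\alpha$. Consequently ``every $\chi\in{\rm Irr}(G)$ with $V\nleq\ker\chi$ vanishes off $Z$'' holds if and only if ``every $\alpha\in{\rm Irr}(P)$ with $V\nleq\ker\alpha$ vanishes off ${Z}(P)$'', so the largest such $V$ is the same whether computed in $G$ or in $P$; that is, ${U}(G)={U}(P)$.
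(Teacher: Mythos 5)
Your overall strategy is sound and genuinely different from the paper's: where the paper deduces that $G/{Z}(G)$ is a $p$-group by observing that $(G,{Z}(G),{U}(G))$ is a Camina triple and quoting Mlaiki's Theorem~2.10, you attempt a self-contained induction. Most of your reductions check out: the members of $S=\mathrm{Irr}(G\mid U)$ are nonlinear, vanish off $Z={Z}(G)$, satisfy ${Z}(\chi)={V}(\chi)=Z$ and $\chi(1)^2=\norm{G:Z}$, and the passage to $G/\ker\chi$ when some $\chi\in S$ is unfaithful is legitimate (the image of $U$ does witness ${U}(G/\ker\chi)>1$, and ${Z}(G/\ker\chi)=Z/\ker\chi$). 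The final paragraph on ${U}(P\times Q)={U}(P)$ is also correct and is essentially the paper's Lemma~\ref{pgps}. The problem is the step you yourself identify as the crux. In the faithful case your argument ends with ``one then forces $\norm{G:Z}$ to be a prime power,'' with no indication of how the minimal normal subgroup and the maximality of $U$ actually produce that conclusion. That sentence is precisely the content of the Camina-triple theorem the paper cites; as written, you have replaced a citation with an assertion, so the theorem is not proved.

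The gap is fillable, and in fact more cheaply than your reductions suggest (you do not need Howlett--Isaacs, solvability, or the faithfulness reduction). Take a minimal normal subgroup $M\le U$; since $M\le{Z}(G)$ it has prime order $q$. For $g\notin Z$ and $1\ne m\in M$, every $\chi\in\mathrm{Irr}(G\mid M)\subseteq S$ vanishes at both $g$ and $mg$, so column orthogonality applied to the columns of $g$ and $mg$ reduces to $\sum_{\chi\in\mathrm{Irr}(G/M)}\norm{\chi(g)}^2>0$, forcing $mg\in\mathrm{cl}_G(g)$. Now if some prime $r\ne q$ divided $\norm{G:Z}$, there would be an $r$-element $x\notin Z$, and then $mx$ (of order $q\cdot o(x)$, since $m$ is central of order $q$ coprime to $o(x)$) would be conjugate to $x$, which is impossible. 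Hence $\norm{G:Z}$ is a power of $q$, and the rest of your argument goes through. Without some such argument (or an explicit appeal to the Camina-triple literature), the proof is incomplete at its central point.
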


Thus, it suffices to assume that $G$ is a nonabelian $p$-group.  We define in Section~\ref{centralseries} a sequence of subgroups $\upsilon_i$ reminiscent of the upper central series of $G$:  $1 = \upsilon_0 \le \upsilon_1 \le \upsilon_2 \le \dotsb$, where $\upsilon_{i+1}/\upsilon_i = {U} (G/\upsilon_i)$ for each integer $i \ge 0$. The quotients $\upsilon_{i+1}/\upsilon_i$ are central by definition, but its terminal member $\upsilon_\infty$ will not reach $G$ in general. The next Theorem describes exactly when this happens.

\begin{introthm}\label{introthm1}
A nonabelian $p$-group $G$ satisfies $\upsilon_\infty=G$ if and only if $G$ is a nested GVZ-group. In this event, if $G=X_0>X_1>\dotsb>X_n>1$ is the chain of centers for $G$, then $\upsilon_i=[G,X_{n-i}]$ for each $0\le i\le n$, and $\upsilon_{n+1}=G$.\
\end{introthm}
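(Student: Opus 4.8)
The plan is to reduce everything to a clean description of $U(G)$ and then run an induction on $|G|$ for both implications at once. Directly from the maximality defining $U(G\mid N)$ one obtains the closed formula $U(G)=U(G\mid Z(G))=\bigcap\{\ker\chi:\chi\in\mathrm{Irr}(G),\ \chi\text{ does not vanish off }Z(G)\}$, and this will be combined with the standard facts that $Z(G)\le Z(\chi)$, that $\ker\chi\le Z(\chi)$, that $Z(\chi)\trianglelefteq G$, that $[G,M]\le\ker\chi\iff M\le Z(\chi)$ whenever $M\trianglelefteq G$, and that $\bigcap_{\chi\in\mathrm{Irr}(H)}\ker\chi=1$ for every group $H$. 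I will also use repeatedly that, for a $p$-group $H$, one has $U(H)=H$ if and only if $H$ is abelian --- for abelian $H$ every irreducible character vacuously vanishes off $Z(H)=H$, while for nonabelian $H$ a nontrivial linear character does not vanish off $Z(H)<H$, forcing $U(H)\le[H,H]<H$ (in particular $U(G)\le[G,G]$ when $G$ is a nonabelian $p$-group).

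The structural heart is the computation of $U(G)$ for a nested GVZ-group $G$ with chain of centers $G=X_0>X_1>\dotsb>X_n=Z(G)$, $n\ge1$. Because $G$ is a GVZ-group, an irreducible character $\chi$ fails to vanish off $Z(G)$ exactly when $Z(\chi)\ne Z(G)$: if $Z(\chi)=Z(G)$ then $\chi$ already vanishes off $Z(\chi)=Z(G)$, and if $Z(\chi)\supsetneq Z(G)$ then $\chi$ is nonzero on $Z(\chi)\setminus Z(G)$. By the chain condition this says exactly that $Z(\chi)\ge X_{n-1}$, equivalently $[G,X_{n-1}]\le\ker\chi$; so the characters not vanishing off $Z(G)$ are precisely those of $\mathrm{Irr}(G/[G,X_{n-1}])$, and intersecting their kernels gives $U(G)=[G,X_{n-1}]$. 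This is the point at which the classification of the irreducible characters of a nested GVZ-group by their centers (Lemma~2.6 of \cite{ML19gvz}) is the natural instrument; the paragraph above simply isolates what is needed. The same bookkeeping shows that $G/[G,X_{n-1}]$ is again a nested GVZ-group, with chain of centers $X_0/[G,X_{n-1}]>\dotsb>X_{n-1}/[G,X_{n-1}]$: we have factored out only a subgroup contained in every surviving kernel, so character values, the centers $Z(\chi)$, the GVZ condition, and the chain condition are all inherited.

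Granting this, both directions follow by induction on $|G|$, with abelian groups as a trivial base case (they are nested GVZ-groups and satisfy $\upsilon_1=U(G)=G$). For the forward direction, let $G$ be a nested GVZ-group; then $\upsilon_1=U(G)=[G,X_{n-1}]$. If $n=1$ this equals $[G,G]$, so $G/\upsilon_1$ is abelian, $\upsilon_2=G$, and together with $\upsilon_0=1=[G,Z(G)]=[G,X_1]$ the stated identities hold; if $n\ge2$, then $G/\upsilon_1$ is a smaller nested GVZ-group with chain of length $n-1$, and since $\upsilon_i(G/\upsilon_1)=\upsilon_{i+1}(G)/\upsilon_1$ the induction hypothesis for $G/\upsilon_1$ re-indexes to give $\upsilon_j(G)=[G,X_{n-j}]$ for $1\le j\le n$ and $\upsilon_{n+1}(G)=G$. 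For the converse, assume $\upsilon_\infty(G)=G$ with $G$ nonabelian, and set $N=\upsilon_1=U(G)$; then $1<N<G$ (the left inequality because $N=1$ would force every $\upsilon_i$ trivial) and $\upsilon_\infty(G/N)=G/N$, so by induction $G/N$ is a nested GVZ-group with chain of centers $G/N=\bar C_0>\dotsb>\bar C_k$, whose preimages in $G$ we denote $C_0,\dotsc,C_k$. The irreducible characters of $G$ with $N\le\ker\chi$ are the inflations from $G/N$; inflation preserves character values and makes $Z(\chi)$ the full preimage of $Z(\bar\chi)$, so each such $\chi$ vanishes off its center $Z(\chi)\in\{C_0,\dotsc,C_k\}$. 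Every other irreducible character satisfies $N=U(G)\not\le\ker\chi$, hence vanishes off $Z(G)$ by the defining property of $U(G)$; since $Z(G)\le Z(\chi)\le V(\chi)\le Z(G)$, this forces $Z(\chi)=Z(G)$ and shows $\chi$ vanishes off its center. Thus $G$ is a GVZ-group, and its set of character centers is $\{C_0,\dotsc,C_k\}\cup\{Z(G)\}$; as each $C_i$ is a character center, $Z(G)\le C_i$ for all $i$, so this set is the chain $G=C_0>\dotsb>C_k\ge Z(G)$ and $G$ is nested. (When $G/N$ happens to be abelian one can shortcut: $[G,G]\le N=U(G)\le[G,G]$, so $U(G)=[G,G]$ and $G$ is a VZ-group by Theorem~\ref{VZ-group}.)

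I expect the second paragraph to be the main obstacle: pinning down $U(G)=[G,X_{n-1}]$ for a nested GVZ-group and, with it, that $G/[G,X_{n-1}]$ stays a nested GVZ-group with the truncated chain of centers --- this is exactly where the internal structure of nested GVZ-groups, via the description of their irreducible characters by center, has to be brought in. In the converse, the subtle step is recovering the whole chain of centers of $G$ from that of $G/U(G)$: one must see that the characters ``detecting'' $U(G)$ are precisely those fully ramified over $Z(G)$, all with center exactly $Z(G)$, so that they contribute only the bottom link and leave the links coming from $G/U(G)$ undisturbed. The rest is routine checking of how kernels, character centers, and vanishing-off subgroups behave under inflation and under passage to quotients.
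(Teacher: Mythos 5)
Your proof is correct and follows essentially the same route as the paper: compute $U(G)=[X_{n-1},G]$ for a nested GVZ-group via Lemma~2.6 of \cite{ML19gvz}, show that $G$ is nested GVZ exactly when $G/U(G)$ is, and induct on $\norm{G}$. The only cosmetic differences are that you obtain $U(G)$ by intersecting kernels of the characters not vanishing off ${Z}(G)$ rather than by a two-containment argument, and you run the forward direction as an induction on $\norm{G}$ with re-indexing instead of the paper's induction on $i$ inside the fixed group.
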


As a consequence of Theorem \ref{introthm1}, we see that nested GVZ $p$-groups  can be defined via the existence of an ascending central series. We also will show that nested GVZ $p$-groups may be defined in terms of a descending central series, which we now describe. We define a chain of subgroups $G = \epsilon_1 \ge \epsilon_2 \ge \dotsb$ by setting $\epsilon_{i+1} = {V} (G \mid [\epsilon_i,G])$ for each integer $i\ge 1$. We also let $\epsilon_\infty$ denote the terminal member of this sequence. Similarly to the above, the quotients $\epsilon_i/\epsilon_{i+1}$ are central in $G/\epsilon_{i+1}$, but this is also not a central series in general. 

\begin{introthm}\label{introthm2}
A nonabelian $p$-group $G$ satisfies $\epsilon_\infty=1$ if and only if $G$ is a nested GVZ-group. In this event, if $G=X_0>X_1>\dotsb>X_n>1$ is the chain of centers for $G$, then $\epsilon_{i+1}=X_i$ for each $0\le i\le n$, and $\epsilon_{n+2}=1$.
\end{introthm}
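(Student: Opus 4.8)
The plan is to prove the two implications of the biconditional separately. The ``if'' direction will be a short induction on the chain of centers; the ``only if'' direction will hinge on a pinching estimate that determines ${V}(\chi)$ and ${Z}(\chi)$ simultaneously for each $\chi\in\mathrm{Irr}(G)$. Throughout I would use two elementary facts. First, for $\chi\in\mathrm{Irr}(G)$ and a normal subgroup $N$ of $G$, one has $[N,G]\le\ker(\chi)$ if and only if $N\le{Z}(\chi)$ (since ${Z}(\chi)/\ker(\chi)={Z}(G/\ker(\chi))$); hence $\chi$ is among the characters defining $\epsilon_{i+1}={V}(G\mid[\epsilon_i,G])$ precisely when $\epsilon_i\nleq{Z}(\chi)$. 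Second, because $G$ is a $p$-group, ${V}(G\mid M)=1$ forces $M=1$: any $\chi$ with $M\nleq\ker(\chi)$ contributes ${Z}(\chi)\ge{Z}(G)>1$ to the generating set of ${V}(G\mid M)$; in particular ${V}(G\mid 1)=1$. I would also use the standard containment ${Z}(\chi)\le{V}(\chi)$ and the fact that in a nested group the smallest center equals ${Z}(G)=\bigcap_{\chi\in\mathrm{Irr}(G)}{Z}(\chi)$.

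For the ``if'' direction, let $G$ be a nonabelian $p$-group that is a nested GVZ-group, with chain of centers $G=X_0>X_1>\dotsb>X_n>1$. I would prove $\epsilon_{i+1}=X_i$ for $0\le i\le n$ by induction on $i$, the base case being $\epsilon_1=G=X_0$. Assuming $\epsilon_{i+1}=X_i$ with $i\le n-1$, the characters defining $\epsilon_{i+2}={V}(G\mid[X_i,G])$ are exactly those with $X_i\nleq{Z}(\chi)$, which by nestedness means ${Z}(\chi)\le X_{i+1}$; since $G$ is a GVZ-group, such a $\chi$ vanishes off ${V}(\chi)={Z}(\chi)\le X_{i+1}$, so $\epsilon_{i+2}\le X_{i+1}$. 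Conversely, choosing $\chi$ with ${Z}(\chi)=X_{i+1}$ (possible since $X_{i+1}$ lies in the chain of centers), we have $X_i\nleq{Z}(\chi)$, so $\chi$ contributes, and every element of $X_{i+1}={Z}(\chi)\subseteq{V}(\chi)$ lies in $\epsilon_{i+2}$; hence $\epsilon_{i+2}=X_{i+1}$. Finally $\epsilon_{n+1}=X_n={Z}(G)$, so $[\epsilon_{n+1},G]=1$ and $\epsilon_{n+2}={V}(G\mid 1)=1$, whence $\epsilon_\infty=1$.

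For the ``only if'' direction, let $G$ be a nonabelian $p$-group with $\epsilon_\infty=1$. Since the recursion is deterministic, the series must strictly descend until it reaches $1$: $G=\epsilon_1>\epsilon_2>\dotsb>\epsilon_m>\epsilon_{m+1}=1$, with $m\ge2$ (as $m=1$ would give $\epsilon_2={V}(G\mid[G,G])=1$, forcing $[G,G]=1$). From $\epsilon_{m+1}=1$ we get $[\epsilon_m,G]=1$, i.e.\ $\epsilon_m\le{Z}(G)$, while $\epsilon_m\ne1$ yields a character contributing to $\epsilon_m$, so $\epsilon_m\ge{Z}(G)$; thus $\epsilon_m={Z}(G)$. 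Now fix a nonlinear $\chi\in\mathrm{Irr}(G)$ and let $j$ be the largest index with $\epsilon_j\nleq{Z}(\chi)$; then $1\le j\le m-1$, since $\epsilon_1=G\nleq{Z}(\chi)$ (as $\chi$ is nonlinear) and $\epsilon_m={Z}(G)\le{Z}(\chi)$. Because $\chi$ contributes to $\epsilon_{j+1}$ we get ${V}(\chi)\le\epsilon_{j+1}$; by maximality of $j$, $\epsilon_{j+1}\le{Z}(\chi)$; and always ${Z}(\chi)\le{V}(\chi)$. Hence ${V}(\chi)\le\epsilon_{j+1}\le{Z}(\chi)\le{V}(\chi)$, which collapses to ${V}(\chi)={Z}(\chi)=\epsilon_{j+1}$. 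Together with the trivial case ${V}(\chi)={Z}(\chi)=G=\epsilon_1$ for linear $\chi$, this shows $G$ is a GVZ-group all of whose centers lie in the chain $\epsilon_1>\dotsb>\epsilon_m$, so $G$ is nested. Finally, each $\epsilon_i$ is attained as a center: $\epsilon_1=G$ is the center of the linear characters, and for $2\le i\le m$ we have $\epsilon_i=\langle{V}(\chi):\epsilon_{i-1}\nleq{Z}(\chi)\rangle=\langle{Z}(\chi):\epsilon_{i-1}\nleq{Z}(\chi)\rangle$, where each such ${Z}(\chi)$ is $\le\epsilon_i$; since these centers form a chain, their join is the largest of them, attained by some $\chi_0$ with ${Z}(\chi_0)=\epsilon_i$. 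Comparing $G=\epsilon_1>\dotsb>\epsilon_m$ with the chain of centers $G=X_0>\dotsb>X_n$ yields $m=n+1$, $X_i=\epsilon_{i+1}$ for $0\le i\le n$, and $\epsilon_{n+2}=\epsilon_{m+1}=1$.

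The hard part will be the pinching step in the ``only if'' direction: one must select the index $j=j(\chi)$ so that $\chi$ contributes to $\epsilon_{j+1}$ but not to $\epsilon_{j+2}$, and then recognize that the chain ${V}(\chi)\le\epsilon_{j+1}\le{Z}(\chi)\le{V}(\chi)$ --- closed up by the elementary inclusion ${Z}(\chi)\le{V}(\chi)$ --- forces equality throughout, so that a single argument delivers both the GVZ property and nestedness at once. Everything else (strict descent of the series, the identification $\epsilon_m={Z}(G)$, and the surjectivity of $i\mapsto\epsilon_i$ onto the chain of centers) should then be routine manipulation with the definition of ${V}(G\mid\cdot)$. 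An alternative route for the ``only if'' direction would be to pass through Theorem~\ref{introthm1} by means of the Galois connection ${V}(G\mid M)\le N\iff M\le{U}(G\mid N)$ from Section~\ref{Galois}, identifying $[\epsilon_{i+1},G]$ with a term of the $\upsilon$-series, but reconciling the indexing of the two central series requires more bookkeeping than the direct argument above.
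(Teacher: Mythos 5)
Your proof is correct, and the two halves compare differently with the paper's. The ``if'' direction is essentially the paper's: the same induction along the chain of centers showing $\epsilon_{i+1}=X_i$, with your equivalence $[N,G]\le\ker(\chi)\Leftrightarrow N\le{Z}(\chi)$ playing the role of Lemma~\ref{lewisgvz}(3). The ``only if'' direction is genuinely different. The paper argues by induction on $\norm{G}$: it shows $1<[\epsilon_{\ell-1},G]\le{U}(G)$, passes to $G/{U}(G)$ via Lemma~\ref{epsiloninduction}, and pulls the nested GVZ property back through Lemma~\ref{upsiloninduction}, so it runs through the ${U}$-machinery of Section~\ref{Galois}. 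Your pinching argument --- fixing a nonlinear $\chi$, taking $j=\max\{i:\epsilon_i\nleq{Z}(\chi)\}$, and closing the cycle ${V}(\chi)\le\epsilon_{j+1}\le{Z}(\chi)\le{V}(\chi)$ --- is a direct, self-contained computation that yields ${V}(\chi)={Z}(\chi)=\epsilon_{j+1}$ at once, hence the GVZ property, nestedness, and the identification of the chain of centers with the nontrivial terms $\epsilon_1>\dotsb>\epsilon_m$ in a single stroke, with no induction on the group order and no appeal to ${U}(G)$. What the paper's route buys is uniformity with the proof of Theorem~\ref{introthm1} (both reduce to the quotient $G/{U}(G)$); what yours buys is brevity and an explicit description of ${Z}(\chi)$ for every irreducible character. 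Two housekeeping points, neither a gap: you correctly take ${V}(G\mid 1)=1$ (the subgroup generated by the empty set), which is what the definition gives and what the conclusion $\epsilon_{n+2}=1$ requires, notwithstanding the stray assertion that ${V}(G\mid 1)=G$ in Section~\ref{prelims}; and your strict-descent claim leans on the paper's lemma that $\epsilon_{i+1}\le\epsilon_i$, which is established before the theorem and so is legitimately available.
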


In particular, whenever $G$ is a nested GVZ-group, these central series recover not only the centers of the irreducible characters of $G$, but also the subgroups $[{Z}(\chi),G]$ for $\chi\in\mathrm{Irr}(G)$. The importance of the latter subgroups to the structure of a nested GVZ-group is illustrated by Lewis in \cite{ML19gvz}.

Observe that if $\upsilon_i$ is a proper subgroup of $[G,G]$, then $\upsilon_{i+1}$ will be contained in $[G,G]$. In particular, since ${U}(G/[G,G])=G/[G,G]$, as $G/[G,G]$ is abelian, we have that $\upsilon_\infty=G$ if and only if $\upsilon_n=[G,G]$ for some $n$. Similarly, $\epsilon_\infty=G$ if and only if $\epsilon_n={Z}(G)$ for some $n$. Therefore, Theorems \ref{introthm1} and \ref{introthm2} can be combined into a single theorem that relates the lengths of these series to the number $\norm{\mathrm{cd}(G)}$ of irreducible character degrees of a nested GVZ-group $G$. This is the main result of the paper.

\begin{introthm}\label{introthm3}
Let $G$ be a $p$-group. The following are equivalent:
\begin{enumerate}[label={\bf(\arabic*)}]
	\item $G$ is a nested GVZ-group and $\norm{\mathrm{cd}(G)}=n$.\\[-2ex]
	\item $\upsilon_{n}=G$ and $\upsilon_{n-1}=[G,G]$.\\[-2ex]
	\item $\epsilon_{n+1}=1$ and $\epsilon_{n}={Z}(G)$.
\end{enumerate}
Moreover, in the event that $G$ is a nested GVZ-group with chain of centers $G=X_0>X_1>\dotsb>X_n>1$, we have $\upsilon_i=[G,X_{n-i}]$ and $\epsilon_{i+1}=X_i$ for each $1\le i\le n$.
\end{introthm}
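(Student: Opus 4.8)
The plan is to read this off from Theorems~\ref{introthm1} and~\ref{introthm2} together with the remarks immediately preceding the statement; there is no essentially new argument here, only bookkeeping with indices. If $G$ is abelian, then $G$ is vacuously a nested GVZ-group with $\norm{\mathrm{cd}(G)}=1$, while $\upsilon_0=[G,G]=1$, $\upsilon_1={U}(G)=G$, $\epsilon_1=G={Z}(G)$, and $\epsilon_2={V}(G\mid[G,G])={V}(G\mid 1)=1$; since $\upsilon_j=G$ for all $j\ge 1$ and $\epsilon_j=1$ for all $j\ge 2$, each of the three conditions holds exactly when $n=1$, and the ``moreover'' clause is trivial. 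So I assume from now on that $G$ is nonabelian, so that $[G,G]<G$ and ${Z}(G)>1$.

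First I would pin down how the two sequences behave near $[G,G]$ and ${Z}(G)$. By the observation preceding the theorem, while $\upsilon_i<[G,G]$ one has $\upsilon_{i+1}\le[G,G]$, whereas $\upsilon_i=[G,G]$ forces $\upsilon_{i+1}=G$ because $G/\upsilon_i$ is then abelian, so ${U}(G/\upsilon_i)=G/\upsilon_i$; hence if the nondecreasing sequence $(\upsilon_i)$ ever reaches $[G,G]$ it does so at a unique index $i_0$, lies strictly below $[G,G]$ before that index, and is identically $G$ after it. Consequently condition~(2) holds for a given $n$ if and only if $\upsilon_\infty=G$ and $n=i_0+1$. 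By the same token, while $\epsilon_i>{Z}(G)$ one has $[\epsilon_i,G]\ne 1$, so some $\psi\in\mathrm{Irr}(G)$ has $[\epsilon_i,G]\not\le\ker\psi$, and such a $\psi$ is nonzero on ${Z}(G)$; hence ${Z}(G)\le{V}(G\mid[\epsilon_i,G])=\epsilon_{i+1}$, whereas $\epsilon_i={Z}(G)$ forces $\epsilon_{i+1}={V}(G\mid 1)=1$. So if the nonincreasing sequence $(\epsilon_i)$ ever reaches ${Z}(G)$ it does so at a unique index $j_0$, lies strictly above ${Z}(G)$ before that index, and is identically $1$ after it, and condition~(3) holds for a given $n$ if and only if $\epsilon_\infty=1$ and $n=j_0$.

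Next, by Theorems~\ref{introthm1} and~\ref{introthm2}, each of ``$\upsilon_\infty=G$'' and ``$\epsilon_\infty=1$'' is equivalent to ``$G$ is a nested GVZ-group'', so it remains to compute $i_0$ and $j_0$ for such a $G$, say with chain of centers $G=X_0>X_1>\dotsb>X_k$; note that $X_k={Z}(G)$, forced by $1=\upsilon_0=[G,X_k]$ in Theorem~\ref{introthm1}. The last ingredient is a degree count: since $G$ is GVZ, every $\chi\in\mathrm{Irr}(G)$ vanishes off ${Z}(\chi)$, so $\chi(1)^2=[G:{Z}(\chi)]$; because the ${Z}(\chi)$ form a chain, distinct members of the chain of centers have distinct indices in $G$, hence distinct associated degrees, while every degree arises this way, so $X\mapsto[G:X]^{1/2}$ is a bijection from the chain of centers onto $\mathrm{cd}(G)$; in particular $k+1=\norm{\mathrm{cd}(G)}$. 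Now Theorem~\ref{introthm1} gives $\upsilon_k=[G,X_0]=[G,G]$ and $\upsilon_{k+1}=G$, so $i_0=k$, and Theorem~\ref{introthm2} gives $\epsilon_{k+1}=X_k={Z}(G)$ and $\epsilon_{k+2}=1$, so $j_0=k+1$. Hence $i_0+1=j_0=k+1=\norm{\mathrm{cd}(G)}$: taking $n$ to be this common value makes both (2) and (3) hold, which together with the second paragraph establishes the equivalence of the three conditions; the two concluding identifications are just the explicit descriptions in Theorems~\ref{introthm1} and~\ref{introthm2}, with the length of the chain of centers rewritten as $\norm{\mathrm{cd}(G)}$.

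The only point needing genuine care is the ``skipping'' step in the second paragraph---that neither sequence can linger at $[G,G]$ or at ${Z}(G)$, because $G/[G,G]$ is abelian and ${Z}(G)\le{V}(G\mid N)$ whenever $N\ne1$---since this is exactly what makes $i_0$, $j_0$, and hence the integer $n$ in (2) and (3), well defined. Everything else is bookkeeping built on Theorems~\ref{introthm1} and~\ref{introthm2} and the observation that a GVZ $p$-group has as many irreducible character degrees as its chain of centers has terms.
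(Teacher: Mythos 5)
Your proof is correct and takes essentially the same route as the paper's: both read the theorem off of Theorems~\ref{introthm1} and~\ref{introthm2} by tracking the unique indices at which the $\upsilon$-series hits $[G,G]$ and the $\epsilon$-series hits ${Z}(G)$, and by identifying $\norm{\mathrm{cd}(G)}$ with the number of terms in the chain of centers. You simply supply more of the bookkeeping than the paper does (the abelian case, the justification that $\norm{\mathrm{cd}(G)}$ equals the chain length, and the uniqueness of $i_0$ and $j_0$), with no new ideas required.
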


\section{Preliminaries}\label{prelims}

In this section, we discuss vanishing-off subgroups, nested groups, and GVZ-groups. Before doing so, however, we will fix some notation, which is standard. For a normal subgroup $N$ of $G$, we will identify the irreducible characters of $G/N$ with the irreducible characters of $G$ containing $N$ in their kernels. For this reason, we let $\mathrm{Irr}(G/N)=\{\chi\in\mathrm{Irr}(G):N\le\ker(\chi)\}$. We let $\mathrm{Irr}(G\mid N)$ denote the complement of $\mathrm{Irr}(G/N)$ in $\mathrm{Irr}(G)$. Given a normal subgroup $N$ of $G$, and a character $\vartheta$ of $G$, we let ${I}_G(\vartheta)$ denote the intertia subgroup of $\vartheta$, which is the set of elements of $G$ fixing $\vartheta$ under the natural action of $G$ on $\mathrm{Irr}(N)$. We will denote the $G$-conjugacy class of an element $g\in G$ by $\mathrm{cl}_G(g)$.

Let $\chi\in\mathrm{Irr}(G)$. As in \cite[Chapter 12]{MI76}, we define ${V}(\chi)$ to be the subgroup generated by all elements $g\in G$ for which $\chi(g)\ne0$. Then ${V}(\chi)$ is a normal subgroup of $G$, and is the smallest normal subgroup $V$ of $G$ such that $\chi$ vanishes on $G\setminus V$. Following \cite{NM14}, we define for a normal subgroup $N$ of $G$ the subgroup ${V}(G\mid N)$ to be the subgroup generated by all $g$ such that $\chi(g)\ne 0$ for some $\chi\in\mathrm{Irr}(G\mid N)$. Then ${V}(G\mid N)$ is the smallest subgroup $V$ of $G$ such that every $\chi\in\mathrm{Irr}(G\mid N)$ vanishes on $G\setminus V$.  Also note that if $N=1$, then ${V}(G\mid N)=G$.

\begin{lem}\label{Vprops}
The following statements hold for every pair $H,N$ of normal subgroups of $G$.
\begin{enumerate}[label={\bf(\arabic*)}]
	\item $N\le{V}(G\mid N)$.\\[-2ex]
	\item ${V}(G\mid HN)={V}(G\mid H){V}(G\mid N)$.\\[-2ex]
	\item If $N\le H$, then ${V}(G\mid N)\le{V}(G\mid H)$.\\[-2ex]
	\item If $N\le H$, then ${V}(G/N\mid H/N){V}(G\mid N)/N={V}(G\mid H)/N$.\\[-2ex]
	\item $V(G\mid N)=\prod_{\chi\in\mathrm{Irr}(G\mid N)}V(\chi)$.
\end{enumerate}
\end{lem}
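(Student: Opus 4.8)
The plan is to verify each of the five statements in turn, with (5) serving as the main engine from which several of the others follow. First, (1) is essentially definitional: if $N \nleq \ker(\chi)$, then $\chi$ does not vanish identically on $N$ (indeed $\chi(1) \neq 0$ and any proper restriction argument forces some $n \in N$ with $\chi(n) \neq 0$), so $N$ is generated by elements witnessed by characters in $\mathrm{Irr}(G \mid N)$; more carefully, I would argue that $N = \langle n \in N : \chi(n) \neq 0 \text{ for some } \chi \in \mathrm{Irr}(G \mid N)\rangle$ by noting that if this generated subgroup $M$ were proper in $N$, every $\chi \in \mathrm{Irr}(G\mid N)$ would vanish on $N \setminus M$, forcing (via column orthogonality on the coset space, or via the fact that characters trivial on the relevant set are sums of characters of $G/($something$)$) a contradiction with $N \nleq \ker(\chi)$. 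For (5), I would use the Isaacs fact that each $V(\chi)$ is the smallest normal subgroup off of which $\chi$ vanishes: the product $\prod_{\chi} V(\chi)$ clearly contains every generator of $V(G\mid N)$ since each such generator lies in some $V(\chi)$; conversely $V(G\mid N)$ contains each $V(\chi)$ because $\chi$ vanishes off $V(G\mid N)$ by definition, and $V(\chi)$ is the \emph{smallest} such subgroup. This gives (5) with no real difficulty.

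Next, (3) is immediate from the definition of $V(G\mid N)$ together with the containment $\mathrm{Irr}(G \mid H) \supseteq \mathrm{Irr}(G \mid N)$ when $N \le H$ (any character not containing $H$ in its kernel certainly does not contain the smaller $N$), so the generating set for $V(G\mid N)$ is a subset of that for $V(G\mid H)$. For (2), the key observation is $\mathrm{Irr}(G \mid HN) = \mathrm{Irr}(G\mid H) \cup \mathrm{Irr}(G \mid N)$: a character $\chi$ contains $HN$ in its kernel if and only if it contains both $H$ and $N$, so $HN \nleq \ker(\chi)$ iff $H \nleq \ker(\chi)$ or $N \nleq \ker(\chi)$. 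Combining this with part (5), $V(G\mid HN) = \prod_{\chi \in \mathrm{Irr}(G\mid HN)} V(\chi) = \bigl(\prod_{\chi \in \mathrm{Irr}(G\mid H)} V(\chi)\bigr)\bigl(\prod_{\chi \in \mathrm{Irr}(G\mid N)} V(\chi)\bigr) = V(G\mid H)\,V(G\mid N)$, where the middle equality uses that the union of the two index sets is the whole index set.

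Finally, (4) is the one requiring the most care, since it involves passing to a quotient. The correspondence $\mathrm{Irr}(G/N) \leftrightarrow \{\chi \in \mathrm{Irr}(G) : N \le \ker\chi\}$ restricts to a bijection $\mathrm{Irr}(G/N \mid H/N) \leftrightarrow \{\chi \in \mathrm{Irr}(G) : N \le \ker\chi,\ H \nleq \ker\chi\}$. For such a $\chi$, viewed as a character $\bar\chi$ of $G/N$, one checks that $V(\bar\chi) = V(\chi)N/N = V(\chi)/N$ (the last step because $N \le \ker\chi \le V(\chi)$): indeed $\bar\chi(\bar g) \neq 0 \iff \chi(g) \neq 0$, so the generating sets correspond under the quotient map. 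Hence, using (5) both for $G/N$ and for $G$,
\[
V(G/N \mid H/N) = \prod_{\substack{\chi \in \mathrm{Irr}(G),\ N \le \ker\chi \\ H \nleq \ker\chi}} V(\chi)/N.
\]
On the other hand $\mathrm{Irr}(G\mid H)$ (since $N \le H$, every such $\chi$ has $N \le H \nleq \ker\chi$, but $\chi$ need not be trivial on $N$) splits into those $\chi$ trivial on $N$ and those not, giving $V(G\mid H) = \bigl(\prod_{N \le \ker\chi,\, H\nleq\ker\chi} V(\chi)\bigr)\bigl(\prod_{N \nleq \ker\chi} V(\chi)\bigr)$; the first factor is the preimage of $V(G/N\mid H/N)$ and the second factor is exactly $V(G\mid N)$ by (5). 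Dividing through by $N$ yields $V(G/N\mid H/N)\,V(G\mid N)/N = V(G\mid H)/N$, which is (4). The main obstacle I anticipate is bookkeeping the quotient correspondence in (4) cleanly — in particular being careful that $N \le \ker\chi$ does \emph{not} follow from $N \le H$, so the index set for $\mathrm{Irr}(G\mid H)$ genuinely decomposes into two disjoint pieces rather than collapsing.
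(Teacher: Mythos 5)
Your proposal is correct and follows essentially the same route as the paper: column orthogonality to rule out an element of $N$ outside the vanishing-off subgroup for (1), the ``smallest subgroup off of which $\chi$ vanishes'' characterization to get the product formula (5), the decomposition $\mathrm{Irr}(G\mid HN)=\mathrm{Irr}(G\mid H)\cup\mathrm{Irr}(G\mid N)$ for (2), index-set containment for (3), and the splitting of $\mathrm{Irr}(G\mid H)$ into the characters trivial on $N$ and those in $\mathrm{Irr}(G\mid N)$ for (4). The only cosmetic difference is that you prove (5) first and use it as the engine, which the paper does implicitly anyway; just make sure the column-orthogonality step in (1) is written out as the class-size count (showing $\norm{\mathrm{cl}_G(n)}=\norm{N}$ is impossible) rather than the vaguer ``contradiction with $N\nleq\ker(\chi)$.''
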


\begin{proof}
To see (1), suppose that there exists $n\in N\setminus{V}(G\mid N)$. Then $\chi(n)=0$ for each $\chi\in\mathrm{Irr}(G\mid N)$, and $n$ lies in the kernel of every other irreducible character of $G$. By column orthogonality (e.g., see Theorem 2.18 of \cite{MI76}), one sees that $\norm{N}=\norm{\mathrm{cl}_G(n)}$, which is strictly less than $\norm{N}$. Thus no such $n$ exists.

We now show (2). To accomplish this, we show that $\mathrm{Irr}(G\mid HN)=\mathrm{Irr}(G\mid H)\cup\mathrm{Irr}(G\mid N)$. Let $\chi\in\mathrm{Irr}(G\mid HN)$ and without loss assume that $H\le \ker(\chi)$. Since $HN\notin\ker(\chi)$, $N$ cannot be contained in $\ker(\chi)$, so $\chi\in\mathrm{Irr}(G\mid N)$. The reverse containment is obvious. Thus, we have
\begin{align*}
{V}(G\mid HN)&=\prod_{\chi\in\mathrm{Irr}(G\mid HN)}{V}(\chi)=\prod_{\substack{\chi\in\mathrm{Irr}(G\mid H)\\\mbox{}\hphantom{bebe}\cup\mathrm{Irr}(G\mid N)}}{V}(\chi)\\
&=\prod_{\chi\in\mathrm{Irr}(G\mid H)}{V}(\chi)\prod_{\chi\in\mathrm{Irr}(G\mid N)}{V}(\chi)\\[2ex]
&={V}(G\mid H){V}(G\mid N).
\end{align*}

Property (3) is immediate from the fact that $\mathrm{Irr}(G\mid N)\subseteq\mathrm{Irr}(G\mid H)$ whenever $N\le H$. 

Next, we show (4). Observe that since $N\le H$, we have $N\le{V}(G\mid H)$, and also that
\[\mathrm{Irr}(G\mid H)=\mathrm{Irr}(G\mid N)\cup\bigl(\mathrm{Irr}(G\mid H)\cap\mathrm{Irr}(G/N)\bigr).\]
It follows that
\[
{V}(G\mid H)={V}(G\mid N)\cdot\prod_{\substack{\chi\in\mathrm{Irr}(G/N)\\H\nleq\ker(\chi)}}{V}(\chi),\]
and the result follows by reducing modulo $N$.

Finally, we show (5). Observe that 
\begin{align*}V(G\mid N)&=\inner{g\in G:\chi(g)\ne 0\ \,\text{for some }\ \,\chi\in\mathrm{Irr}(G\mid N)}\\
				      &=\inner{g\in G:g\in V(\chi)\ \,\text{for some }\ \,\chi\in\mathrm{Irr}(G\mid N)}\\
				      &\le\prod_{\chi\in\mathrm{Irr}(G\mid N)}V(\chi).
\end{align*}
The reverse containment is clear.
\end{proof}

It is an elementary exercise to show that every $\chi\in\mathrm{Irr}(G)$ satisfies the inequality $\chi(1)^2 \le \norm{G:{Z}(\chi)}$, with equality if and only if ${V}(\chi)={Z}(\chi)$ (see \cite[Corollary 2.30]{MI76}. Therefore, one can see that the condition of $G$ being a GVZ-group is equivalent to that of $G$ satisfying $\chi(1)^2 = \norm{G:{Z}(\chi)}$ for all characters $\chi \in \mathrm{Irr}(G)$.  It will at times be advantageous for our purposes to use this description. 



We will list here the results about nested groups from \cite{ML19gvz} that will be needed in the sequel. 

\begin{lem}[{\normalfont\cite[Lemma 2.2, Corollary 2.5, Lemma 2.6]{ML19gvz}}]\label{lewisgvz}
Let $G$ be a nested group with chain of centers $G=X_0>X_1>\dotsb>X_n\ge 1$. The following statements hold. 
\begin{enumerate}[label={\bf(\arabic*)}]
\item $X_n={Z}(G)$.\\[-2ex]
\item $[X_i,G]<[X_{i-1},G]$ for each $1\le i\le n$.\\[-2ex]
\item ${Z}(\chi)=X_i$ for $\chi\in\mathrm{Irr}(G)$ if and only if $[X_i,G]\le\ker(\chi)$ and $[X_{i-1},G]\nleq\ker(\chi)$.
\end{enumerate}
\end{lem}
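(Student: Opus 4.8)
Although Lemma~\ref{lewisgvz} is quoted from \cite{ML19gvz}, I sketch how one would prove it directly. The plan is to reduce all three parts to two standard facts about the center ${Z}(\chi)$ of an irreducible character $\chi\in\mathrm{Irr}(G)$: it is normal in $G$, and ${Z}(\chi)/\ker(\chi)={Z}(G/\ker(\chi))$ (see \cite[Lemma 2.27]{MI76}); in particular $\ker(\chi)\le{Z}(\chi)$, and since ${Z}(\chi)/\ker(\chi)$ is central in $G/\ker(\chi)$ we get $[{Z}(\chi),G]\le\ker(\chi)$. I would also record at the outset that ${Z}(G)=\bigcap_{\chi\in\mathrm{Irr}(G)}{Z}(\chi)$: the inclusion ``$\subseteq$'' is Schur's lemma, and if $x\in{Z}(\chi)$ for every $\chi$ then $x$ is central in each $G/\ker(\chi)$, so $[x,G]\le\bigcap_\chi\ker(\chi)=1$. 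Since the subgroups ${Z}(\chi)$ are precisely $X_0,\dots,X_n$ and these form a chain, their intersection is the smallest of them, namely $X_n$; this proves (1).

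The heart of the matter is (3), which I would prove in both directions for $1\le i\le n$, so that $X_{i-1}$ is defined. For the forward implication, suppose ${Z}(\chi)=X_i$. Then $[X_i,G]=[{Z}(\chi),G]\le\ker(\chi)$ is immediate. If in addition $[X_{i-1},G]\le\ker(\chi)$, then the image of $X_{i-1}$ in $G/\ker(\chi)$ is central, hence lies in ${Z}(G/\ker(\chi))=X_i/\ker(\chi)$, forcing $X_{i-1}\le X_i$ and contradicting $X_{i-1}>X_i$. Thus $[X_{i-1},G]\nleq\ker(\chi)$.

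For the converse, assume $[X_i,G]\le\ker(\chi)$ and $[X_{i-1},G]\nleq\ker(\chi)$. Because $G$ is nested, ${Z}(\chi)=X_j$ for some $j$, and $\ker(\chi)\le X_j$. From $[X_i,G]\le\ker(\chi)$ the image of $X_i$ is central in $G/\ker(\chi)$, hence contained in ${Z}(G/\ker(\chi))=X_j/\ker(\chi)$, so $X_i\le X_j$; in chain terms, $j\le i$. If $j\le i-1$, then $X_{i-1}\le X_j={Z}(\chi)$, and so $[X_{i-1},G]\le[{Z}(\chi),G]\le\ker(\chi)$, contrary to hypothesis. Hence $j=i$, proving (3). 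Part (2) then follows: choosing $\chi\in\mathrm{Irr}(G)$ with ${Z}(\chi)=X_i$, part (3) gives $[X_i,G]\le\ker(\chi)$ while $[X_{i-1},G]\nleq\ker(\chi)$, so the (automatic) containment $[X_i,G]\le[X_{i-1},G]$ must be strict.

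I do not expect a genuine obstacle: the real content is the structural identity ${Z}(\chi)/\ker(\chi)={Z}(G/\ker(\chi))$, after which everything is bookkeeping with the chain of centers. The one step needing care is the converse of (3): one must keep track of the index $j$ with ${Z}(\chi)=X_j$, check the containments $\ker(\chi)\le X_j$ and $\ker(\chi)\le X_i$ that license passing between $G$ and $G/\ker(\chi)$, and apply the convention ``larger index corresponds to smaller subgroup'' in the chain consistently throughout.
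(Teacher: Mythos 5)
The paper does not prove this lemma at all --- it is quoted verbatim from \cite{ML19gvz} (Lemma 2.2, Corollary 2.5, Lemma 2.6 there), so there is no in-paper argument to compare against. Your blind proof is correct and self-contained: everything does reduce to the identity ${Z}(\chi)/\ker(\chi)={Z}(G/\ker(\chi))$ from \cite[Lemma 2.27]{MI76}, and your bookkeeping with the chain is accurate. In particular, the converse of (3) is handled properly: from $[X_i,G]\le\ker(\chi)$ you correctly deduce $X_i\ker(\chi)/\ker(\chi)\le{Z}(G/\ker(\chi))=X_j/\ker(\chi)$, hence $X_i\le X_j$ and $j\le i$, and the case $j\le i-1$ is correctly excluded via $[X_{i-1},G]\le[{Z}(\chi),G]\le\ker(\chi)$. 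Part (2) does require, as you implicitly use, that each $X_i$ is actually realized as ${Z}(\chi)$ for some $\chi$; this holds by the definition of the chain of centers, and would be worth stating explicitly. The only other point to flag is that statement (3) as printed does not restrict $i$, whereas your argument (necessarily) treats $1\le i\le n$; for $i=0$ one needs a convention for $X_{-1}$, which is a defect of the quoted statement rather than of your proof.
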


\section{A Galois connection}\label{Galois}

Let $G$ be a group, and let $N$ be a normal subgroup of $G$. Define the subgroup
\[{U}(G\mid N)=\prod_{\substack{H\lhd G\\{V}(G\mid H)\le N}}H.\]
If $U = {U} (G \mid N)$, then $U$ is the largest normal subgroup of $G$ for which every member of $\mathrm{Irr} (G \mid U)$ vanishes on $G \setminus N$. In particular, if $H \nleq U$, then there exists $\chi \in \mathrm{Irr} (G \mid H)$ that does not vanish off of $N$. Therefore, the subgroup ${U}(G\mid N)$ identifies a set of characters that, in some sense, is maximal with respect to vanishing on $G\setminus N$.

The next result yields the promised Galois connection. For more information on Galois connections, we refer the reader to \cite{galoisprimer}.

\begin{lem}\label{uiff}
Let $H$ and $N$ be normal subgroups of a group $G$. Then $H \le {U} (G\mid N)$ if and only if ${V} (G \mid H) \le N$. In particular the maps $N \mapsto {U} (G \mid N)$ and $N \mapsto {V} (G \mid N)$ give a (monotone) Galois connection from the lattice $\mathrm{Norm} (G)$ of normal subgroups of $G$ to itself. 
\end{lem}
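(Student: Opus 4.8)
The plan is to prove both implications directly from the definitions, then observe that the biconditional is exactly the defining adjunction property of a monotone Galois connection. First I would prove the ``if'' direction: suppose ${V}(G\mid H)\le N$. Then $H$ is one of the normal subgroups occurring in the product defining ${U}(G\mid N)=\prod_{\substack{K\lhd G\\{V}(G\mid K)\le N}}K$, so $H\le {U}(G\mid N)$ immediately. The only subtlety here is that the product ranges over all such $K$, and we only need that $H$ is one term of it; since each term is normal and $H\le H$, we are done.

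For the ``only if'' direction, suppose $H\le {U}(G\mid N)$; I want ${V}(G\mid H)\le N$. By definition, ${U}(G\mid N)$ is the product of all normal subgroups $K$ with ${V}(G\mid K)\le N$. Using Lemma \ref{Vprops}(3), the assignment $K\mapsto {V}(G\mid K)$ is monotone, and by Lemma \ref{Vprops}(2) it converts products of normal subgroups to products: ${V}(G\mid K_1K_2)={V}(G\mid K_1){V}(G\mid K_2)$. Hence if $K_1,\dots,K_r$ are normal subgroups each satisfying ${V}(G\mid K_i)\le N$, then
\[
{V}\Bigl(G\ \Big|\ \prod_{i=1}^r K_i\Bigr)=\prod_{i=1}^r {V}(G\mid K_i)\le N,
\]
since $N$ is a subgroup and each factor lies in $N$. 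Applying this with $\{K_i\}$ the (finite) collection of normal subgroups defining ${U}(G\mid N)$ gives ${V}(G\mid {U}(G\mid N))\le N$. Then $H\le {U}(G\mid N)$ together with monotonicity (Lemma \ref{Vprops}(3)) yields ${V}(G\mid H)\le {V}(G\mid {U}(G\mid N))\le N$, as desired.

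Finally, for the Galois connection statement, I would recall that a pair of order-preserving maps $F,G$ between posets forms a monotone Galois connection precisely when $F(x)\le y\iff x\le G(y)$ for all $x,y$ (here one map is applied to the opposite order in the antitone formulation, but we are using the monotone convention). Monotonicity of $N\mapsto {V}(G\mid N)$ is Lemma \ref{Vprops}(3); monotonicity of $N\mapsto {U}(G\mid N)$ follows directly from the definition, since enlarging $N$ only enlarges the index set $\{K\lhd G:{V}(G\mid K)\le N\}$ over which the product is taken. The biconditional just proved is then the adjunction condition, so the two maps give a monotone Galois connection on $\mathrm{Norm}(G)$.

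The main obstacle is the ``only if'' direction, and specifically the step that ${V}(G\mid {U}(G\mid N))\le N$; this requires knowing that ${V}(G\mid -)$ is well behaved with respect to the (possibly large, but finite) product defining ${U}(G\mid N)$, which is exactly what parts (2) and (3) of Lemma \ref{Vprops} supply. Everything else is bookkeeping with the definitions.
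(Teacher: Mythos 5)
Your proposal is correct and follows essentially the same route as the paper: the ``if'' direction is immediate from the definition of ${U}(G\mid N)$, and the ``only if'' direction applies Lemma~\ref{Vprops}(3) to get ${V}(G\mid H)\le{V}(G\mid{U}(G\mid N))$ and Lemma~\ref{Vprops}(2) to expand the latter as a product of subgroups of $N$. The extra remarks on monotonicity of $N\mapsto{U}(G\mid N)$ and the adjunction formulation are fine but not needed beyond what the paper records.
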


\begin{proof}
If ${V}(G\mid H)\le N$, then it is clear from the definition that $H\le{U}(G\mid N)$.  If $H\le{U}(G\mid N)$, then
\begin{align*}{V}(G\mid H)&\le{V}(G\mid{U}(G\mid N))\\
&=\prod_{\substack{K\lhd G\\{V}(G\mid K)\le N}}{V}(G\mid K)\le N.
\end{align*}
\end{proof}

The following properties hold in general when one has a Galois connection (e.g., see \cite{galoisprimer}). However, we will include a proof for completeness.
\begin{cor}
Let $G$ be a group.  In particular, the following statements hold for each $H,N \lhd G$.
\begin{enumerate}[label={\bf(\arabic*)}]\openup5pt
\item $N\le{U} (G \mid {V} (G \mid N))$.
\item ${V} (G \mid {U} (G \mid N))\le N$.
\item ${U} (G \mid N)\le{U} (G \mid H)$ if $N\le H$.
\item ${U}(G\mid H\cap N)={U}(G \mid H)\cap{U}(G\mid N)$.
\item ${V}(G\mid{U}(G\mid{V} (G \mid N)))={V}(G\mid N)$.
\item ${U}(G\mid{V}(G\mid{U}(G\mid N)))={U}(G\mid N)$.
\item ${V}(G\mid N)=\bigcap\limits_{\substack{H\lhd G\\N\le{U}(G\mid H)}}H$.
\end{enumerate}
\end{cor}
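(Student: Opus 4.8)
The plan is to derive all seven statements formally from the single adjunction identity of Lemma~\ref{uiff} --- that $H\le{U}(G\mid N)$ if and only if ${V}(G\mid H)\le N$ --- together with the monotonicity of $N\mapsto{V}(G\mid N)$ recorded in Lemma~\ref{Vprops}(3); no further character theory is needed, since everything of substance is already packaged into Lemma~\ref{uiff}. Writing $f(N)={V}(G\mid N)$ and $g(N)={U}(G\mid N)$ for brevity, I would first dispatch (3): if $N\le H$ and $K=g(N)$, then $f(K)\le N\le H$, so $K\le g(H)$ by the adjunction --- alternatively (3) is immediate from the defining product for ${U}(G\mid N)$, since enlarging $N$ only enlarges the index set of that product.

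For (1) and (2) I would instantiate the adjunction ``on the diagonal'': taking $H=N$ in Lemma~\ref{uiff} gives $N\le g(f(N))$ because $f(N)\le f(N)$ trivially, which is (1); taking $H=g(N)$ gives $f(g(N))\le N$ because $g(N)\le g(N)$, which is (2). Part (4) expresses that the right adjoint $g$ preserves intersections: the inclusion $g(H\cap N)\le g(H)\cap g(N)$ follows from (3), and for the reverse one sets $K=g(H)\cap g(N)$ and uses monotonicity of $f$ together with (2) to get $f(K)\le f(g(H))\le H$ and $f(K)\le f(g(N))\le N$, hence $f(K)\le H\cap N$, hence $K\le g(H\cap N)$ by the adjunction.

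Parts (5) and (6) are the two triangle identities and follow purely formally: for (5), applying the monotone map $f$ to the inclusion in (1) gives $f(N)\le f(g(f(N)))$, while (2) applied with $f(N)$ in place of $N$ gives $f(g(f(N)))\le f(N)$; for (6) one argues symmetrically, applying the monotone map $g$ to (2) to get $g(f(g(N)))\le g(N)$ and then invoking (1) with $g(N)$ in place of $N$ for the reverse inclusion. Finally, (7) is the definition of ${U}$ run backwards: reading Lemma~\ref{uiff} with the two normal subgroups' names interchanged yields $N\le{U}(G\mid H)$ if and only if ${V}(G\mid N)\le H$, so the index set $\{H\lhd G:N\le{U}(G\mid H)\}$ coincides with $\{H\lhd G:{V}(G\mid N)\le H\}$, and the intersection of all normal subgroups containing ${V}(G\mid N)$ is ${V}(G\mid N)$ itself, as it lies in the family. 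The only real subtlety throughout is keeping the direction of the adjunction straight and recognizing (7) as the mirror image of the definition of ${U}$; once that is in hand, each item is a one- or two-line consequence of (1)--(3).
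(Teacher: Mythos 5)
Your proposal is correct and follows essentially the same route as the paper: every item is derived formally from the adjunction in Lemma~\ref{uiff} together with monotonicity of $N\mapsto{V}(G\mid N)$, exactly as in the paper's proof. The only cosmetic difference is that you phrase (5)--(7) explicitly as triangle identities and a reindexing of the intersection, which is a clean packaging of what the paper does in prose.
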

\begin{proof}
Properties (1) and (2) follow immediately from Lemma~\ref{uiff}.

Property (3) is clear from the definition of these subgroups since ${V}(G\mid N)\le{V}(G\mid H)$.

To see (4), first observe that we have ${U}(G\mid H\cap N)\le{U}(G\mid H)\cap{U}(G\mid N)$ by (3). Also we have
\[{V}\bigl(G\mid{U}(G\mid H)\cap{U}(G\mid N)\bigr)\le{V}(G\mid{U}(G\mid H))\cap{V}(G\mid{U}(G\mid N)),\]
by (3) of Lemma~\ref{Vprops}. By (2), the latter subgroup is contained in $H\cap N$, and so statement (4) follows from Lemma~\ref{uiff}.

Now we show (5). By (1), we have $N\le{U}(G\mid{V}(G\mid N))$, and so it follows that ${V}(G\mid N)\le{V}(G\mid{U}(G\mid{V}(G\mid N)))$. Since we also have ${U}(G\mid{V}(G\mid N))\le {U}(G\mid{V}(G\mid N))$, the reverse containment follows from Lemma~\ref{uiff}.

The proof of (6) is similar.

Let $W$ be the intersection of all normal subgroups $H$ satisfying $N\le{U}(G\mid H)$. Since ${V}(G\mid N)\le H$ whenever $N\le{U}(G\mid N)$, we have ${V}(G\mid N)\le W$. By (1), ${V}(G\mid N)$ is among the subgroups $H$ in the intersection defining $W$, so $W\le{V}(G\mid N)$ as well.
\end{proof}

We now present some basic properties of ${U}(G\mid N)$.

\begin{lem}\label{uproperties}
Let $G$ be a nonabelian group.  The following hold. 
\begin{enumerate}[label={\bf(\arabic*)}]\openup5pt
\item For each $N\lhd G$, ${U}(G\mid N)$ is the unique largest subgroup, $U\le G$ such that every character in $\mathrm{Irr}(G\mid U)$ vanishes on $G\setminus N$.
\item For each $N\lhd G$, we have $g\in{U}(G\mid N)$ if and only if every $\chi\in\mathrm{Irr}(G)$ satisfying $g\notin\ker(\chi)$ vanishes on $G\setminus N$.
\item For each $N\lhd G$, we have ${U}(G\mid N)\le N\cap[G,G]$. 
\item If $N$ is characteristic in $G$, so is ${U}(G\mid N)$.
\end{enumerate}
\end{lem}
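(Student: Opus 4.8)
The plan is to extract everything from the product definition of ${U}(G\mid N)$, the fact (recorded just after the definition of ${V}(G\mid N)$) that ${V}(G\mid H)$ is the smallest normal subgroup of $G$ off of which every character in $\mathrm{Irr}(G\mid H)$ vanishes, the formula of Lemma~\ref{Vprops}(5), and the Galois connection of Lemma~\ref{uiff}. Throughout I write $\langle g\rangle^{G}$ for the normal closure of $g$ in $G$, and I read $\mathrm{Irr}(G\mid U)=\{\chi\in\mathrm{Irr}(G):U\nleq\ker\chi\}$ for an arbitrary subgroup $U$. The key observation for (1) is that for a normal subgroup $H$, the inequality ${V}(G\mid H)\le N$ holds if and only if every $\chi\in\mathrm{Irr}(G\mid H)$ vanishes on $G\setminus N$, by minimality of ${V}(G\mid H)$. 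Write $U_{0}={U}(G\mid N)$. If $U\le G$ is any subgroup such that every $\chi\in\mathrm{Irr}(G\mid U)$ vanishes on $G\setminus N$, then, since $\ker\chi\lhd G$ forces $\mathrm{Irr}(G\mid U)=\mathrm{Irr}(G\mid\langle U\rangle^{G})$, we may replace $U$ by its normal closure and assume $U\lhd G$; then ${V}(G\mid U)\le N$, so $U$ is one of the factors in the product defining $U_{0}$ and hence $U\le U_{0}$. Conversely $U_{0}$ itself has the property: if $U_{0}\nleq\ker\chi$, then, because $U_{0}$ is generated by the normal subgroups $H$ with ${V}(G\mid H)\le N$, at least one such $H$ satisfies $H\nleq\ker\chi$, so $\chi\in\mathrm{Irr}(G\mid H)$ and $\chi$ vanishes on $G\setminus N\subseteq G\setminus{V}(G\mid H)$. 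Thus $U_{0}$ is the (unique) largest subgroup with the stated property, which is (1).

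Statement (2) is (1) applied to the single element $g$: since $\ker\chi$ is normal, $g\in\ker\chi$ if and only if $\langle g\rangle^{G}\le\ker\chi$, so $\mathrm{Irr}(G\mid\langle g\rangle^{G})=\{\chi\in\mathrm{Irr}(G):g\notin\ker\chi\}$. If $g\in{U}(G\mid N)$ then $\langle g\rangle^{G}\le{U}(G\mid N)$ (the latter is normal), so every $\chi$ with $g\notin\ker\chi$ vanishes on $G\setminus N$ by (1); conversely, if every such $\chi$ vanishes on $G\setminus N$ then ${V}(G\mid\langle g\rangle^{G})\le N$, so $\langle g\rangle^{G}\le{U}(G\mid N)$ and $g\in{U}(G\mid N)$. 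For (3), the inclusion ${U}(G\mid N)\le N$ is immediate from the Galois connection: Lemma~\ref{uiff} gives ${V}(G\mid{U}(G\mid N))\le N$, while Lemma~\ref{Vprops}(1) gives ${U}(G\mid N)\le{V}(G\mid{U}(G\mid N))$. For ${U}(G\mid N)\le[G,G]$ (assuming $N<G$), if $g\in{U}(G\mid N)\setminus[G,G]$ then $G/[G,G]$ admits a linear character nontrivial on $g[G,G]$, i.e.\ there is a linear $\lambda\in\mathrm{Irr}(G)$ with $g\notin\ker\lambda$; but a linear character is nowhere zero, so $\lambda$ does not vanish on the nonempty set $G\setminus N$, contradicting (2).

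Finally, for (4), let $\alpha\in\mathrm{Aut}(G)$. The automorphism $\alpha$ permutes $\mathrm{Irr}(G)$, carries $\mathrm{Irr}(G\mid H)$ bijectively onto $\mathrm{Irr}(G\mid H^{\alpha})$ (because $\ker(\chi^{\alpha})=(\ker\chi)^{\alpha}$), and satisfies ${V}(\chi^{\alpha})={V}(\chi)^{\alpha}$; combined with Lemma~\ref{Vprops}(5) this gives ${V}(G\mid H^{\alpha})={V}(G\mid H)^{\alpha}$ for every $H\lhd G$. Hence, when $N$ is characteristic, $H\mapsto H^{\alpha}$ permutes the index set $\{H\lhd G:{V}(G\mid H)\le N\}$, so $\alpha$ fixes the product ${U}(G\mid N)$, proving it is characteristic.

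I expect the only genuine work to lie in (1): matching ``largest subgroup off of which all of $\mathrm{Irr}(G\mid U)$ vanishes'' with the product definition, and in particular the small but essential point that if a product of normal subgroups is not contained in $\ker\chi$, then one of the factors already fails to lie in $\ker\chi$. Everything else is a formal consequence of (1), the Galois connection, and standard facts about linear characters and the $\mathrm{Aut}(G)$-action on $\mathrm{Irr}(G)$; the degenerate case $N=G$ in (3) would be handled (or excluded) separately.
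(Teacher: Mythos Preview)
Your proposal is correct and follows essentially the same approach as the paper: part (1) via the definition and Lemma~\ref{uiff}, part (2) via normal closure of $g$, part (3) via the Galois connection and the nonvanishing of linear characters, and part (4) via ${V}(G\mid H^{\alpha})={V}(G\mid H)^{\alpha}$ using Lemma~\ref{Vprops}(5). You are in fact slightly more careful than the paper in two places: in (1) you explicitly reduce a general subgroup $U$ to its normal closure before invoking Lemma~\ref{uiff}, and in (3) you correctly flag the degenerate case $N=G$, where the linear-character argument and the conclusion ${U}(G\mid N)\le[G,G]$ break down.
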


\begin{proof}
If every character in $\mathrm{Irr}(G\mid H)$ vanishes on $G\setminus N$, then ${V}(G\mid H)\le N$, so $H\le{U}(G\mid N)$ by Lemma~\ref{uiff}. This establishes (1).

To show (2), first note that  for an irreducible character $\chi$, we have $g\in\ker(\chi)$ if and only if $\inner{g}^G\le\ker(\chi)$, where $\inner{g}^G$ denotes the normal closure of $\inner{g}$. Hence  every $\chi\in\mathrm{Irr}(G)$ satisfying $g\notin\ker(\chi)$ vanishes on $G\setminus N$ if and only if every $\chi\in\mathrm{Irr}(G\mid\inner{g}^G)$ vanishes on $G\setminus N$. The latter happens if and only if ${V}(G\mid \inner{g}^G)\le N$, which happens if and only if $\inner{g}^G\le{U}(G\mid N)$. Finally, we note that since ${U}(G\mid N)\lhd G$, ${U}(G\mid N)$ contains $g$ if and only if it contains $\inner{g}^G$.

It is clear from the definition that $U(G\mid N)\le N$. The rest of statement (3) follows from the fact that no linear character can vanish on any element of $G$. In particular, this means that $\mathrm{Irr}(G\mid U(G\mid N))\subseteq\mathrm{Irr}(G\mid[G,G])$.

Finally, we show (4). Let $\tau\in\mathrm{Aut}(G)$, and let $N$ be characteristic in $G$. Let $H\lhd G$ satisfy ${V}(G\mid H)\le N$. To verify (4), it suffices to show that ${V}(G\mid H^\tau)\le N$. To see this, we show that ${V}(G\mid H^\tau)={V}(G\mid H)^\tau$. Since $\chi^\tau(g^\tau)=\chi(g)$, it follows that $V(\chi^\tau)=V(\chi)^\tau$ for every $\chi\in\mathrm{Irr}(G)$. Also since $\ker(\chi^\tau)=\ker(\chi)^\tau$, we have $\mathrm{Irr}(G\mid H)^\tau=\mathrm{Irr}(G\mid H^\tau)$. Hence
\[V(G\mid H^\tau)=\prod_{\chi\in\mathrm{Irr}(G\mid H^\tau)}V(\chi)=\prod_{\chi\in\mathrm{Irr}(G\mid H)}V(\chi)^\tau=V(G\mid H)^\tau.\]
\end{proof}

Whenever $N={Z}(G)$, we simply write ${U}(G)={U}(G\mid N)$. We can now prove Theorem \ref{VZ-group}.

\begin{proof}[Proof of Theorem \ref{VZ-group}]
Suppose first that $G$ is a VZ-group.  Taking $N = {Z}(G)$ in Lemma \ref{uproperties}, we see that $[G,G]$ is the unique largest subgroup of $G$ so that every character in $\mathrm{Irr} (G \mid [G,G])$ vanishes on $G \setminus {Z}(G)$.  Hence, we have ${U} (G) = [G,G]$.  Conversely, if ${U} (G) = [G,G]$, then every irreducible character in $\mathrm{Irr} (G \mid [G,G])$ vanishes on $G \setminus {Z}(G)$.  That is, every nonlinear irreducible character of $G$ vanishes off of ${Z}(G)$, and so, $G$ is a VZ-group.	
\end{proof}

We next consider how ${U}(G)$ interacts with quotients.

\begin{lem}\label{uquotients}
Let $H,N\lhd G$ satisfy ${V}(G\mid N)\le H$. Then 
\[{U}(G/N\mid H/N)={U}(G\mid H)/N.\]
\end{lem}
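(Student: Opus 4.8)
The plan is to establish the equality by showing mutual containment, leveraging the Galois-connection machinery from Lemma~\ref{uiff} together with the behavior of ${V}(G\mid -)$ under quotients recorded in Lemma~\ref{Vprops}(4). Throughout, note that the hypothesis ${V}(G\mid N)\le H$ guarantees (via Lemma~\ref{uiff}) that $N\le{U}(G\mid H)$, so that ${U}(G\mid H)/N$ makes sense as a subgroup of $G/N$; this is the reason the hypothesis is imposed.

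First I would prove the containment ${U}(G\mid H)/N\le{U}(G/N\mid H/N)$. Write $U={U}(G\mid H)$, so $N\le U$ and ${V}(G\mid U)\le H$ by the Galois connection. Applying Lemma~\ref{Vprops}(4) with the pair $N\le U$ gives ${V}(G/N\mid U/N)\cdot{V}(G\mid N)/N={V}(G\mid U)/N$, and since ${V}(G\mid N)\le H$ by hypothesis, dividing through by $N$ yields ${V}(G/N\mid U/N)\le{V}(G\mid U)/N\le H/N$. By Lemma~\ref{uiff} applied in the quotient group $G/N$, this says precisely that $U/N\le{U}(G/N\mid H/N)$.

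Next I would prove the reverse containment. Let $\bar{K}$ be a normal subgroup of $G/N$ with ${V}(G/N\mid\bar{K})\le H/N$; write $\bar{K}=K/N$ for the corresponding normal subgroup $K\lhd G$ with $N\le K$. Using Lemma~\ref{Vprops}(4) again for $N\le K$, we get ${V}(G\mid K)/N={V}(G/N\mid K/N)\cdot{V}(G\mid N)/N\le (H/N)\cdot(H/N)=H/N$, where again ${V}(G\mid N)\le H$ was used; hence ${V}(G\mid K)\le H$, so $K\le{U}(G\mid H)$ by Lemma~\ref{uiff}, i.e.\ $\bar{K}=K/N\le{U}(G\mid H)/N$. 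Since ${U}(G/N\mid H/N)$ is the product of all such $\bar{K}$, we conclude ${U}(G/N\mid H/N)\le{U}(G\mid H)/N$, completing the proof.

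The one point that needs care — and the only place I anticipate any friction — is the bookkeeping in applying Lemma~\ref{Vprops}(4): that lemma is stated for a pair $N\le H$ of normal subgroups, and in the two applications above the roles are played by $N\le U$ and $N\le K$ respectively, so I must check these really are normal subgroup pairs with $N$ contained in the larger one (for $U={U}(G\mid H)$ this is exactly where the hypothesis ${V}(G\mid N)\le H$ enters, via Lemma~\ref{uiff}). Everything else is a formal manipulation of the Galois connection, so no genuine obstacle remains.
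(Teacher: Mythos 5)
Your proof is correct and rests on the same key ingredients as the paper's own argument: Lemma~\ref{Vprops}(4) to transfer the condition ${V}(G\mid K)\le H$ between $G$ and $G/N$, together with the Galois connection of Lemma~\ref{uiff}. The only difference is organizational — the paper compares the full collections of normal subgroups $K$ with ${V}(G\mid K)\le H$ in $G$ and in $G/N$ (and must separately handle the $K$ not containing $N$), whereas you argue by mutual containment and use ${V}(G\mid{U}(G\mid H))\le H$ to treat the whole subgroup ${U}(G\mid H)$ at once; both routes are equally valid.
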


\begin{proof}
Let $x\mapsto\overline{x}$ denote the canonical surjection $G\to G/N$. Define the sets 
\[\mathcal{C}=\{K\lhd G:{V}(G\mid K)\le H\}\ \,\text{and}\ \,\mathcal{D}= \{\overline{K}\lhd \overline{G}:{V}(\overline{G}\mid \overline{K})\le \overline{H}\}.\] We claim that $\mathcal{D}=\overline{\mathcal{C}}$. However, we first show that $\mathcal{D}=\overline{\mathcal{C}'}$, where
\[\mathcal{C}'=\{K\lhd G:N\le K,\ \,\text{and}\ \,{V}(G\mid K)\le H\}\] To that end, let $K\lhd G$ satisfy $N\le K$ Then $N\le{V}(G\mid K)$, and so we have
\[{V}(\overline{G}\mid \overline{K}){V}(G\mid N)/N={V}(G\mid K)/N.\]
So it follows that $\overline{K}\in\mathcal{D}$ if and only if $K\in\mathcal{C}'$, as claimed. 

In particular, this gives
\[{U}(\overline{G}\mid \overline{H})=\prod_{K\in\mathcal{D}}K=\prod_{K\in\mathcal{C}'}\overline{K}=\overline{\prod_{K\in\mathcal{C}'}K}.\]

Next note that since ${V}(G\mid N)\le H$, we have $K\in\mathcal{C}$ if and only if $KN\in\mathcal{C}'$. Hence
\[{U}(G\mid H)=\prod_{K\in\mathcal{C}}K=\prod_{K\in\mathcal{C}}KN=\prod_{K\in\mathcal{C}'}K.\]
The result now follows by taking quotients.
\end{proof}

We conclude this section by showing that a group $G$ satisfying ${U}(G) > 1$ is essentially a $p$-group in the sense that it is a $p$-group up to a central direct factor. Before doing this however, we review {\it Camina triples}. A triple $(G,M,N)$ where $N\le M$ are normal subgroups of the group $G$ is called a Camina triple if every character  $\chi \in \mathrm{Irr} (M\mid N)$ induces homogeneously to $G$.  These objects were first studied by Mattarei in his Ph.D. thesis \cite{SM92}.  Many more properties of Camina triples can be found in \cite{NM14}, were the following two results can be found.

\begin{lem}\label{NM1}{\normalfont(cf. \cite[Theorem 2.1]{NM14})}\hspace{\labelsep}
	Let $N \le M$ be normal subgroups of the group $G$.  Then the triple $(G,M,N)$ is a Camina triple if and only if ${V}(G\mid N)\le M$.
\end{lem}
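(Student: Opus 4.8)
The plan is to pass to the vanishing-off description of Lemma~\ref{NM1}: by the discussion preceding Lemma~\ref{Vprops}, the inequality ${V}(G\mid N)\le M$ is equivalent to the statement that every $\chi\in\mathrm{Irr}(G\mid N)$ vanishes on $G\setminus M$, so I will establish the equivalence of \emph{this} statement with the Camina-triple condition on $(G,M,N)$. Since $1<N$ for a Camina triple, we may assume $1<N\le M$. Throughout I will use two standard facts: (i) a character induced from a normal subgroup vanishes off that subgroup, so $\vartheta^G$ vanishes on $G\setminus M$ for every $\vartheta\in\mathrm{Irr}(M)$; and (ii) if $\psi\in\mathrm{Irr}(G)$ and $\vartheta\in\mathrm{Irr}(M)$ is a constituent of $\psi_M$, then by Clifford's theorem $\psi_M=e\,(\vartheta_1+\dots+\vartheta_t)$, where $\vartheta_1,\dots,\vartheta_t$ are the distinct $G$-conjugates of $\vartheta$ and $e=[\psi_M,\vartheta]=[\psi,\vartheta^G]$ by Frobenius reciprocity. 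I will also use the elementary observation that, because $N\le M$, one has $N\le\ker(\chi)$ if and only if $N\le\ker(\vartheta)$ for every irreducible constituent $\vartheta$ of $\chi_M$.

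For the forward direction, assume $(G,M,N)$ is a Camina triple and fix $\chi\in\mathrm{Irr}(G\mid N)$. Since $N\nleq\ker(\chi)$, the observation above yields an irreducible constituent $\vartheta$ of $\chi_M$ with $N\nleq\ker(\vartheta)$, i.e. $\vartheta\in\mathrm{Irr}(M\mid N)$. By hypothesis $\vartheta^G=e\psi$ for some $\psi\in\mathrm{Irr}(G)$, and Frobenius reciprocity gives $[\chi,\vartheta^G]=[\chi_M,\vartheta]>0$, forcing $\chi=\psi$. By fact (i) the character $\vartheta^G=e\chi$ vanishes on $G\setminus M$, hence so does $\chi$. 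Thus every $\chi\in\mathrm{Irr}(G\mid N)$ vanishes off $M$, i.e. ${V}(G\mid N)\le M$.

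For the converse, assume every member of $\mathrm{Irr}(G\mid N)$ vanishes on $G\setminus M$, and fix $\vartheta\in\mathrm{Irr}(M\mid N)$; I must show $\vartheta^G$ is homogeneous. Let $\psi_1,\dots,\psi_s$ be the distinct irreducible constituents of $\vartheta^G$. Each $\psi_j$ has $\vartheta$ as a constituent of $(\psi_j)_M$ (Frobenius reciprocity), so $N\nleq\ker(\psi_j)$, whence $\psi_j\in\mathrm{Irr}(G\mid N)$ and $\psi_j$ vanishes on $G\setminus M$ by hypothesis. By fact (ii), $(\psi_j)_M=e_j\Theta$, where $\Theta=\vartheta_1+\dots+\vartheta_t$ is the sum of the distinct $G$-conjugates of $\vartheta$ and is independent of $j$. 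Therefore, for any $j$ and $k$, the class functions $e_k\psi_j$ and $e_j\psi_k$ agree on $M$ (both equal $e_je_k\Theta$ there) and vanish on $G\setminus M$, so $e_k\psi_j=e_j\psi_k$ on all of $G$; taking inner products with $\psi_j$ gives $[\psi_j,\psi_k]=e_k/e_j\ne0$, which for $j\ne k$ contradicts the distinctness of the $\psi_i$. Hence $s=1$, and $\vartheta^G=[\vartheta^G,\psi_1]\psi_1=e_1\psi_1$ is homogeneous. As $\vartheta\in\mathrm{Irr}(M\mid N)$ was arbitrary, $(G,M,N)$ is a Camina triple.

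The bulk of this is routine Clifford theory; the single point deserving care is in the converse. Homogeneity of $\vartheta^G$ is \emph{not} forced by the restriction to $M$ alone — the irreducible constituents of $\vartheta^G$ automatically have mutually proportional restrictions to $M$, whatever hypothesis is in force — so the argument only works once one also knows that each such constituent lies in $\mathrm{Irr}(G\mid N)$ and hence (by assumption) vanishes off $M$; it is precisely this extra vanishing that upgrades ``proportional on $M$'' to ``equal on $G$.'' Locating exactly where the hypothesis ${V}(G\mid N)\le M$ enters is thus the conceptual crux, and everything else is bookkeeping with induced characters.
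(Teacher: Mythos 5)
Your argument is correct. Note that the paper does not prove this lemma at all --- it is quoted from Mlaiki \cite[Theorem 2.1]{NM14} --- so there is no in-paper proof to compare against; your write-up is a sound, self-contained reconstruction of the standard Clifford-theoretic argument. Both directions check out: in the forward direction, the existence of a constituent $\vartheta\in\mathrm{Irr}(M\mid N)$ of $\chi_M$ follows because $\ker(\chi)\cap M$ is the intersection of the kernels of the constituents of $\chi_M$, and homogeneity then forces $\vartheta^G=e\chi$, which vanishes off the normal subgroup $M$. In the converse, your key step --- that $e_k\psi_j$ and $e_j\psi_k$ agree on $M$ and both vanish off $M$, hence coincide as class functions, forcing $[\psi_j,\psi_k]\ne 0$ --- is exactly the point where the vanishing hypothesis is used, and your closing remark correctly identifies that proportionality of the restrictions alone would not suffice. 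The only minor caveat is the degenerate case $N=1$: the paper adopts the convention ${V}(G\mid 1)=G$ (even though $\mathrm{Irr}(G\mid 1)=\emptyset$), so the biconditional is only cleanly true for $N>1$; your explicit restriction to $1<N$ handles this appropriately.
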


\begin{lem}\label{NM2}{\normalfont(cf. \cite[Theorem 2.10]{NM14})}\hspace{\labelsep}
Let $(G, M, N)$ be a Camina triple. If $G/M$ is not a $p$-group for any prime
$p$, then $N\cap {Z}(G) = 1$.
\end{lem}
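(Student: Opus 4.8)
The plan is to prove the contrapositive: assuming $N\cap{Z}(G)\ne 1$, I will produce a prime $p$ for which $G/M$ is a $p$-group. Fix $1\ne z\in N\cap{Z}(G)$ and, after replacing $z$ by a suitable power, assume that $z$ has prime order $p$; note that $\langle z\rangle\le N$ and that $\langle z\rangle$ is central in $G$. First I would translate the Camina triple hypothesis using Lemma~\ref{NM1}: it gives ${V}(G\mid N)\le M$, so every character in $\mathrm{Irr}(G\mid N)$ vanishes on $G\setminus M$; since $\langle z\rangle\le N$, every $\chi\in\mathrm{Irr}(G)$ with $z\notin\ker(\chi)$ lies in $\mathrm{Irr}(G\mid N)$ and hence vanishes on $G\setminus M$.

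The crux is to show that $g$ is conjugate to $gz$ for every $g\in G\setminus M$. Because $z$ is central, for each $\chi\in\mathrm{Irr}(G)$ we have $\chi(gz)=\omega_\chi(z)\chi(g)$, where $\omega_\chi(z)$ is a root of unity that equals $1$ precisely when $z\in\ker(\chi)$. Hence
\[\sum_{\chi\in\mathrm{Irr}(G)}\chi(g)\,\overline{\chi(gz)}=\sum_{\chi\in\mathrm{Irr}(G)}\overline{\omega_\chi(z)}\,\norm{\chi(g)}^{2}.\]
For $g\in G\setminus M$ the only $\chi$ with $\chi(g)\ne 0$ are those with $z\in\ker(\chi)$, i.e.\ those with $\omega_\chi(z)=1$, so the right-hand side collapses to $\sum_{\chi}\norm{\chi(g)}^{2}=\norm{C_G(g)}\ne 0$. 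By column orthogonality (see, e.g., Theorem~2.18 of \cite{MI76}), $g$ and $gz$ must be conjugate in $G$; in particular $g$ and $gz$ have the same order.

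To conclude, suppose $G/M$ is not a $p$-group. Then some prime $q\ne p$ divides $\norm{G:M}$, so a Sylow $q$-subgroup $Q$ of $G$ is not contained in $M$, and I may choose a nontrivial $q$-element $g\in Q\setminus M$. Since $z$ is central of order $p$ and $g$ has order a power of $q$, the elements $g$ and $z$ commute and have coprime orders, so $gz$ has order equal to $p$ times the order of $g$ --- contradicting that $g$ and $gz$ are conjugate. Therefore $G/M$ is a $p$-group, which establishes the contrapositive. The one place needing care is the orthogonality step: one must verify that for $g\notin M$ the twisted sum $\sum_\chi\overline{\omega_\chi(z)}\norm{\chi(g)}^{2}$ genuinely reduces to the ordinary column sum $\norm{C_G(g)}$, and one must carry out the reduction of $z$ to prime order before invoking coprimality at the end; beyond those points the argument is routine.
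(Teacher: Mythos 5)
Your proof is correct and complete. Note that the paper does not actually prove this lemma --- it is quoted from \cite{NM14} --- so there is no in-paper argument to compare against; your argument (reduce to a central $z\in N$ of prime order $p$, use Lemma~\ref{NM1} to see that every $\chi$ not vanishing at some $g\notin M$ has $z\in\ker(\chi)$, deduce from the second orthogonality relation that $g$ is conjugate to $gz$ for all $g\in G\setminus M$, and then contradict equality of orders using a nontrivial $q$-element outside $M$ for $q\ne p$) is the standard Camina-pair technique, correctly adapted to triples, and every step checks out.
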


Observe that, by construction, every $\lambda\in\mathrm{Irr}({Z}(G)\mid {U}(G))$ is fully ramified with respect to $G/{Z}(G)$, and hence induces homogeneously to $G$. In particular, if ${U}(G)>1$ then $(G,{Z}(G),{U}(G))$ is a Camina triple. Note that this yields the next result, which is the first statement of Theorem~\ref{U > 1}.

\begin{lem}\label{nontrivueqpq}
Let $G$ be a nonabelian group. If ${U}(G) > 1$, then $G=P\times Q$, where $p$ is a $p$-group for some prime $p$, and $Q$ is an abelian $p'$-group. In particular, $G$ is nilpotent.
\end{lem}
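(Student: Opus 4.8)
The plan is to exploit the Camina triple $(G,{Z}(G),{U}(G))$ noted just above the statement, together with Lemmas~\ref{uproperties} and~\ref{NM2}. First I would record that part~(3) of Lemma~\ref{uproperties} gives ${U}(G)\le{Z}(G)\cap[G,G]$; in particular ${U}(G)\le{Z}(G)$, so the hypothesis ${U}(G)>1$ says exactly that ${U}(G)\cap{Z}(G)\ne1$. Since $(G,{Z}(G),{U}(G))$ is a Camina triple, the contrapositive of Lemma~\ref{NM2} then forces $G/{Z}(G)$ to be a $p$-group for some prime $p$.

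Next I would deduce that $G$ is nilpotent. Indeed $G/{Z}(G)$ is nilpotent, being a $p$-group, and a finite group whose central quotient is nilpotent is itself nilpotent; hence $G$ is a finite nilpotent group, and therefore is the internal direct product $G=P\times Q$ of its Sylow $p$-subgroup $P$ and its Hall $p'$-subgroup $Q$.

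It remains to show that $Q$ is abelian. From $G=P\times Q$ we get ${Z}(G)={Z}(P)\times{Z}(Q)$, and hence $G/{Z}(G)\cong\bigl(P/{Z}(P)\bigr)\times\bigl(Q/{Z}(Q)\bigr)$. The factor $Q/{Z}(Q)$ is a $p'$-group, being a quotient of $Q$, while it is simultaneously a direct factor of the $p$-group $G/{Z}(G)$; hence $Q/{Z}(Q)=1$, i.e.\ $Q$ is abelian. Since $G$ is nonabelian while $Q$ is abelian, $P$ is nonabelian, and $G=P\times Q$ is the asserted decomposition. Nilpotence of $G$ is then immediate from $G=P\times Q$ with $Q$ abelian.

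I do not expect a genuine obstacle here; the only step requiring a moment's care is the translation of the hypothesis, namely observing via Lemma~\ref{uproperties}(3) that ${U}(G)\le{Z}(G)$, so that ${U}(G)>1$ becomes ${U}(G)\cap{Z}(G)\ne1$, which is precisely the condition whose failure is hypothesized in Lemma~\ref{NM2}.
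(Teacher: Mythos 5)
Your proof is correct and follows essentially the same route as the paper: both hinge on the Camina triple $(G,{Z}(G),{U}(G))$ and Lemma~\ref{NM2} to force $G/{Z}(G)$ to be a $p$-group. The only (immaterial) difference is in the finishing step, where the paper takes $Q$ to be a complement to a Sylow $p$-subgroup of ${Z}(G)$ directly, while you pass through nilpotence of $G$ and the Sylow decomposition before checking $Q$ is abelian.
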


\begin{proof}
Since ${V}(G\mid{U}(G))\le{Z}(G)$, we have that $(G,{Z}(G),{U}(G))$ is a Camina triple by Lemma~\ref{NM1}. By Lemma~\ref{NM2}, $G/{Z}(G)$ must be a $p$-group since ${U}(G)\le{Z}(G)$. Thus, if $Q$ is a complement for a Sylow $p$-subgroup of ${Z}(G)$, then $Q$ is direct factor of $G$.
\end{proof}

We also obtain a partial converse of Lemma \ref{nontrivueqpq}, which includes the second statement of Theorem~\ref{U > 1}.

\begin{lem}\label{pgps}
Let $G=P\times Q$, where $p$ is a nonabelian $p$-group for some prime $p$, and $Q$ is an abelian $p'$-group. Then
\[{U}(G)={U}(P).\]
\end{lem}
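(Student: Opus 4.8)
The plan is to reduce everything to the behavior of irreducible characters of $G = P \times Q$ and exploit that $\mathrm{Irr}(G) = \{ \varphi \times \lambda : \varphi \in \mathrm{Irr}(P),\ \lambda \in \mathrm{Irr}(Q)\}$, where $\lambda$ is always linear since $Q$ is abelian. First I would record the elementary facts that ${Z}(G) = {Z}(P) \times Q$ and $[G,G] = [P,P]$, and that for $\chi = \varphi \times \lambda$ one has $\chi(pq) = \varphi(p)\lambda(q)$, so $\chi(pq) \ne 0$ if and only if $\varphi(p) \ne 0$; consequently $V(\chi) = V(\varphi) \times Q$ when $\varphi$ is nonlinear, and $V(\chi) = 1$-ish (contained in $[G,G]=[P,P]$, in fact trivial contribution) when $\varphi$ is linear. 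Combining via Lemma~\ref{Vprops}(5), this gives ${V}(G \mid [G,G]) = {V}(P \mid [P,P]) \times 1 = {V}(P \mid [P,P])$, since $\mathrm{Irr}(G \mid [G,G])$ consists exactly of the $\varphi \times \lambda$ with $\varphi$ nonlinear (as $[G,G] = [P,P] \nleq \ker(\varphi \times \lambda)$ forces $\varphi$ nonlinear).

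Next I would unwind the definition ${U}(G) = {U}(G \mid {Z}(G)) = \prod_{H \lhd G,\ {V}(G \mid H) \le {Z}(G)} H$. By Lemma~\ref{uproperties}(3), any such $H$ satisfies ${U}(G) \le {Z}(G) \cap [G,G] \le {Z}(P) \cap [P,P]$, so in particular ${U}(G) \le P$. The key reduction is: for a normal subgroup $H$ of $G$ with $H \le P$ (equivalently $H \lhd P$, using that $Q$ centralizes $P$), I claim ${V}(G \mid H) \le {Z}(G)$ if and only if ${V}(P \mid H) \le {Z}(P)$. This should follow because $\mathrm{Irr}(G \mid H) = \{\varphi \times \lambda : H \nleq \ker(\varphi),\ \lambda \in \mathrm{Irr}(Q)\}$, so $\varphi$ ranges exactly over $\mathrm{Irr}(P \mid H)$ and is nonlinear (since $H \le [P,P]$ after the first reduction, or more carefully since $H \le {U}(G) \le [P,P]$ once we know $H$ contributes), and then ${V}(G \mid H) = {V}(P \mid H) \times Q$ by the product formula for $V(\chi)$ above; this is contained in ${Z}(P) \times Q = {Z}(G)$ exactly when ${V}(P \mid H) \le {Z}(P)$. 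Running the product in the definition of ${U}$ over exactly the same index set on both sides then yields ${U}(G) = {U}(P)$.

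The main obstacle I anticipate is the bookkeeping around which normal subgroups of $G$ need to be considered in the product defining ${U}(G)$: a priori a normal subgroup $H \lhd G$ need not be contained in $P$, so I must first argue that only subgroups of $P$ can contribute (using Lemma~\ref{uproperties}(3) to confine things inside $[G,G] = [P,P] \le P$), and then check that for such $H$ the vanishing set $V(\chi)$ genuinely splits as a direct product across the $P \times Q$ decomposition — the subtle point being that a nonlinear $\chi = \varphi \times \lambda$ has $V(\chi) = V(\varphi) \times Q$ rather than $V(\varphi) \times 1$, because every element of $Q$ is ``visible'' through the nonzero value $\lambda$, yet this extra $Q$ is harmless since it already sits inside ${Z}(G)$. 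Once those two points are pinned down, the equality ${U}(G) = {U}(P)$ drops out by comparing the defining products term by term; the rest is routine.
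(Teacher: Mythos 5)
Your proof is correct and follows essentially the same route as the paper: decompose $\mathrm{Irr}(G)$ as $\{\varphi\times\lambda\}$, confine the normal subgroups $H$ contributing to ${U}(G)$ inside $P$, and observe that for such $H$ one has ${V}(G\mid H)={V}(P\mid H)\times Q$, which lies in ${Z}(P)\times Q={Z}(G)$ exactly when ${V}(P\mid H)\le{Z}(P)$. The only real difference is how you confine $H$ to $P$: you use Lemma~\ref{uproperties}(3) (so $H\le{U}(G)\le{Z}(G)\cap[G,G]=[P,P]$), whereas the paper argues directly that $Q\cap H>1$ forces $P\le{V}(\mathbbm{1}_P\times\vartheta)\le{V}(G\mid H)$; both are fine, and yours is arguably cleaner. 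One slip to repair before writing this up: for $\varphi$ linear, ${V}(\varphi\times\lambda)=G$ (linear characters never vanish), not something contained in $[G,G]$, and consequently ${V}(G\mid[G,G])={V}(P\mid[P,P])\times Q$ rather than ${V}(P\mid[P,P])\times 1$ --- you correct the latter point yourself in your closing paragraph, and neither misstatement is load-bearing, since the decisive computation is the one for $H\le[P,P]$ where all relevant $\varphi$ are nonlinear.
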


\begin{proof}
Let $H\lhd G$, and note that $H=(P\cap H)\times (Q\cap H)$. Let $\alpha\in\mathrm{Irr}(P)$ and $\beta\in\mathrm{Irr}(Q)$, and let $\chi=\alpha\times\beta$. Then since $(\norm{P},\norm{Q})=1$, we have $\ker(\alpha\times\beta)=\ker(\alpha)\times\ker(\beta)$. So $H\le\ker(\alpha\times\beta)$ if and only if $P\cap H\le\ker(\alpha)$ and $Q\cap H\le\ker(\beta)$. In particular, this means that $\chi\in\mathrm{Irr}(G\mid H)$ if and only if $\alpha\in\mathrm{Irr}(P\mid P\cap H)$ or $\beta\in\mathrm{Irr}(Q\mid Q\cap H)$. 

Assume that $Q\cap H>1$. If $\vartheta\in\mathrm{Irr}(Q\mid Q\cap H)$, then $\mathbbm{1}_P\times\vartheta\in\mathrm{Irr}(G\mid H)$ and $P\le{V}(\mathbbm{1}_P\times\vartheta)\le{V}(G\mid H)$. In particular, ${V}(G\mid H)\ne{Z}(G)$. Therefore, every $H\lhd G$ satisfying ${V}(G\mid H)={Z}(G)$ must also satisfy $H\cap Q=1$. So let $H$ be one such subgroup, and note that $1<H< P$. Then every $\chi\in\mathrm{Irr}(G\mid H)$ has the form $\mu\times\vartheta$ for some $\mu\in\mathrm{Irr}(P\mid H)$ and $\vartheta\in\mathrm{Irr}(Q)$, and such a character satisfies ${V}(\chi)=Q{V}(\vartheta)$. It follows that ${V}(G\mid H)=Q{V}(P\mid H)$, and so ${V}(G\mid H)={Z}(G)$ if and only if ${V}(P\mid H)={Z}(P)$. The result follows.
\end{proof}

This completes the proof of Theorem~\ref{U > 1}, and also yields the following corollary.

\begin{cor}\label{esspgp}
A nonabelian group $G$ satisfies ${U}(G)>1$ if and only if $G/{Z}(G)$ is a $p$-group for some prime $p$, and a Sylow $p$-subgroup of $G$ satisfies ${U}(P)>1$.
\end{cor}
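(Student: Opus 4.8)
The plan is to read this corollary off from Lemmas~\ref{nontrivueqpq} and~\ref{pgps}, using in addition two elementary facts: that a group whose central quotient is a $p$-group is nilpotent, and that ${U}(P)\le[P,P]$ by Lemma~\ref{uproperties}(3), so that ${U}(P)>1$ forces $P$ to be nonabelian.

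For the forward implication, suppose ${U}(G)>1$. Lemma~\ref{nontrivueqpq} gives $G=P\times Q$ with $P$ a $p$-group and $Q$ an abelian $p'$-group; since $\gcd(\norm{P},\norm{Q})=1$, the subgroup $P$ is a Sylow $p$-subgroup of $G$. Because $Q$ is abelian we have ${Z}(G)={Z}(P)\times Q$, and hence $G/{Z}(G)\cong P/{Z}(P)$ is a $p$-group. As $G$ is nonabelian while $Q$ is abelian, $P$ must be nonabelian, so Lemma~\ref{pgps} applies and yields ${U}(P)={U}(G)>1$.

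For the reverse implication, suppose $G/{Z}(G)$ is a $p$-group and that a Sylow $p$-subgroup $P$ of $G$ satisfies ${U}(P)>1$. Then $G/{Z}(G)$ is nilpotent, hence so is $G$, and therefore $G=P\times Q$ where $P$ is its Sylow $p$-subgroup and $Q$ its Hall $p'$-subgroup. From $G/{Z}(G)\cong P/{Z}(P)\times Q/{Z}(Q)$ together with the fact that the left-hand side is a $p$-group while $Q/{Z}(Q)$ is a $p'$-group, we get $Q/{Z}(Q)=1$, i.e.\ $Q$ is abelian. Finally ${U}(P)>1$ gives $[P,P]>1$, so $P$ is nonabelian, and Lemma~\ref{pgps} yields ${U}(G)={U}(P)>1$, as desired.

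I do not anticipate any serious obstacle: once Lemmas~\ref{nontrivueqpq} and~\ref{pgps} are in hand the argument is essentially bookkeeping with direct products. The one point requiring a moment's care is confirming, in each direction, that the factor $P$ appearing in the direct decomposition is genuinely nonabelian, so that the hypotheses of Lemma~\ref{pgps} are met; this is supplied by the standing assumption that $G$ is nonabelian (with $Q$ abelian) in the forward direction, and by the inclusion ${U}(P)\le[P,P]$ in the reverse direction.
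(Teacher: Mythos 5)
Your overall route is exactly the one the paper intends: the corollary is stated there as an immediate consequence of Lemmas~\ref{nontrivueqpq} and~\ref{pgps}, and both of your implications assemble those two lemmas in the expected way. The forward direction is fine as written.

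The one flaw sits precisely at the point you flagged as needing care. In the reverse direction you justify that $P$ is nonabelian by citing ${U}(P)\le[P,P]$ from Lemma~\ref{uproperties}. But that lemma is stated only for nonabelian groups, and the containment is genuinely false for abelian ones: as the paper remarks at the start of Section~\ref{centralseries}, an abelian group $A$ satisfies ${U}(A)=A$, so a nontrivial abelian $P$ would also have ${U}(P)>1$ while $[P,P]=1$. Invoking the lemma here is therefore circular --- you would need to know $P$ is nonabelian before you may use it. The repair is immediate and is the same observation you already made in the forward direction: having shown $Q$ is abelian, the standing hypothesis that $G=P\times Q$ is nonabelian forces $P$ to be nonabelian, and then Lemma~\ref{pgps} applies. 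With that substitution the argument is complete.
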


It is clear that the subgroup ${U}(G)$ is a central subgroup of $G$. It turns out that this subgroup is actually contained in the socle of $G$. 

\begin{lem}\label{elabel1}
Let $G$ be a nonabelian $p$-group. Then $U={U}(G)$ is elementary abelian.
\end{lem}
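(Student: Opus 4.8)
The plan is to show that $U = {U}(G)$ has exponent $p$, which combined with the already-established fact that $U$ is central (hence abelian) gives that $U$ is elementary abelian. The key leverage is part (2) of Lemma~\ref{uproperties}: an element $g$ lies in ${U}(G)$ if and only if every $\chi \in \mathrm{Irr}(G)$ with $g \notin \ker(\chi)$ vanishes on $G \setminus {Z}(G)$. So suppose $g \in U$ with $g \ne 1$, and aim to show $g^p = 1$; equivalently, assuming $g^p \ne 1$, I want to produce some $\chi \in \mathrm{Irr}(G)$ with $g^p \notin \ker(\chi)$ (hence also $g \notin \ker(\chi)$, since $\ker(\chi)$ is a subgroup) that does \emph{not} vanish off of ${Z}(G)$, contradicting $g \in U$.

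First I would reduce to a convenient setting. Since $U$ is central, pick $\chi \in \mathrm{Irr}(G)$ with $g \notin \ker(\chi)$ (such $\chi$ exists as the irreducible characters separate points); by Lemma~\ref{uproperties}(2), $\chi$ vanishes on $G \setminus {Z}(G)$, so $\chi$ is fully ramified over ${Z}(\chi) \supseteq {Z}(G)$ and $\chi(1)^2 = \norm{G : {Z}(\chi)}$. The restriction $\chi_{{Z}(G)} = \chi(1)\lambda$ for some linear $\lambda \in \mathrm{Irr}({Z}(G))$ with $g \notin \ker(\lambda)$ (because $g \in {Z}(G) \setminus \ker(\chi)$ forces $g \notin \ker(\lambda)$). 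The central element $g$ acts on the representation affording $\chi$ as the scalar $\lambda(g)$, a root of unity of order equal to the order of $\lambda(g)$ in $\mathbb{C}^\times$. The crucial point to extract is that $g^p \in \ker(\chi)$ for \emph{every} such $\chi$: indeed $g^p \in \ker(\chi)$ would mean $\lambda(g)$ has order dividing $p$, i.e. $\lambda(g^p) = 1$.

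The heart of the argument — and the step I expect to be the main obstacle — is showing that if $g^p \ne 1$ then some irreducible character of $G$ is nontrivial on $g^p$ but does \emph{not} vanish off ${Z}(G)$, contradicting $g^p \in U$... wait, $g^p$ need not be in $U$. Let me instead argue directly: I claim that if $g \in U$ then $\lambda(g)$ has order dividing $p$ for every linear constituent $\lambda$ of $\chi_{{Z}(G)}$ with $\chi$ ranging over characters nontrivial on $g$. Consider the quotient $\overline{G} = G/\langle g^p \rangle^{G} = G/\langle g^p\rangle$ (normal since $g$ is central). The image $\overline{g}$ has order $p$. If $\overline{g} \ne 1$ in $\overline{G}$, pick $\overline\chi \in \mathrm{Irr}(\overline G)$ nontrivial on $\overline g$, inflate to $\chi \in \mathrm{Irr}(G)$; then $g \notin \ker(\chi)$ but $g^p \in \ker(\chi)$ automatically, so this gives no contradiction. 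The real mechanism must instead exploit that $U$ lies in the socle: I would use that ${U}(G)$, being central, is a direct product of cyclic groups, and apply Lemma~\ref{uproperties}(2) to a \emph{cyclic subgroup} $\langle g \rangle \le U$ of order $p^k$ with $k \ge 2$: the character $\chi$ nontrivial on the element of order $p^k$ restricts to ${Z}(G)$ as $\chi(1)\lambda$ with $\lambda$ of order $p^k$; then $\lambda$ restricted to the unique subgroup of order $p$ in $\langle g\rangle$ is a \emph{nontrivial} linear character of a central subgroup, and any character of $G$ lying over it and nontrivial on that subgroup of order $p$ — obtained as a constituent of $\chi$ itself — does not vanish off ${Z}(G)$. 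I would tie this off by invoking the structure of fully ramified sections (Isaacs, Chapter 6) to see that the scalar by which $g$ acts forces $g$ itself (not just $g^p$) out of the kernel of such a character, and that no nonlinear character nontrivial on the order-$p$ socle element of $\langle g \rangle$ can avoid vanishing off the center unless that element has order $p$ — contradicting $k \ge 2$. Hence every cyclic subgroup of $U$ has order $p$, so $U$ has exponent $p$ and, being abelian, is elementary abelian.
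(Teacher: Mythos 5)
Your reduction is the right one --- $U$ is central, so it suffices to rule out an element of order $p^2$, and you correctly observe that every $\chi\in\mathrm{Irr}(G\mid U)$ is fully ramified over ${Z}(G)$ with $\chi(1)^2=\norm{G:{Z}(G)}$. But the argument never closes, and the mechanism you propose for the contradiction cannot work in the form you describe. You hope to exhibit some irreducible character that is nontrivial on $g$ (or on the order-$p$ element $g^{p^{k-1}}$ of $\inner{g}$) and yet fails to vanish off ${Z}(G)$. No such witness exists: $U$ is a subgroup, so $g^p$ and $g^{p^{k-1}}$ lie in $U$ (your aside that ``$g^p$ need not be in $U$'' is a slip), and by Lemma~\ref{uproperties}(2) \emph{every} character nontrivial on \emph{any} element of $U$ vanishes off ${Z}(G)$ --- that is the definition of $U$, not a contradiction. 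Your closing claim, that ``no nonlinear character nontrivial on the order-$p$ socle element of $\inner{g}$ can avoid vanishing off the center unless that element has order $p$,'' is both unsupported and logically inert (the socle element has order $p$ by construction). A character-count is also unavailing: the degrees of $\mathrm{Irr}(G\mid U)$ together with those of $\mathrm{Irr}(G/U)$ sum to $\norm{G}$ perfectly well even if one pretends $\exp(U)>p$, so the obstruction is genuinely more delicate than anything in your sketch.

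The paper's proof supplies exactly the missing ingredient by citing Lemma~4.1 of Isaacs--Moret\'o \cite{IM01}: if $\lambda$ is a linear character of a central subgroup of a $p$-group with $o(\lambda)>p$ and all members of $\mathrm{Irr}(G\mid\lambda)$ have equal degree $d$, then $d=\norm{G:{I}_G(\lambda)}$. Taking $\lambda\in\mathrm{Irr}(U)$ of order exceeding $p$, all members of $\mathrm{Irr}(G\mid\lambda)\subseteq\mathrm{Irr}(G\mid U)$ have degree $\norm{G:{Z}(G)}^{1/2}$, while $\lambda$ is $G$-invariant because $U$ is central, forcing $\norm{G:{Z}(G)}^{1/2}=1$ and contradicting that $G$ is nonabelian. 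If you want to avoid the citation you must prove a statement of comparable strength (or exploit the Camina-triple structure of $(G,{Z}(G),U)$ in some other way); the vague appeal to ``the structure of fully ramified sections'' does not substitute for it. As written, the proof has a genuine gap at its central step.
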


\begin{proof}
If $U=1$, this is clear, so assume that $U>1$. To reach a contradiction, assume that $\exp(U)>p$. Then there exists an irreducible character $\lambda$ of $U$ of order exceeding $p$. Since $\chi(1)=\norm{G:{Z}(G)}^{1/2}$ for every $\chi\in\mathrm{Irr}(G\mid U)$, and $U$ is central, it follows from \cite[Lemma 4.1]{IM01} that $\norm{G:{I}_G(\lambda)}=\norm{G:{Z}(G)}^{1/2}$, which cannot be since $\lambda$ is $G$-invariant and $G$ is nonabelian. Thus no such $\lambda$ exists and it follows that $U$ is elementary abelian.
\end{proof}

Combining this result with Lemmas~\ref{nontrivueqpq} and Lemma~\ref{pgps}, we have the following corollary.

\begin{cor}For every nonabelian group $G$, the subgroup ${U}(G)$ is elementary abelian.
\end{cor}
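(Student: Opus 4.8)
The plan is to reduce the general statement to the $p$-group case that has already been handled in Lemma~\ref{elabel1}. Recall that Lemma~\ref{nontrivueqpq} tells us that if $G$ is nonabelian and ${U}(G) > 1$, then $G = P \times Q$ with $P$ a nonabelian $p$-group and $Q$ an abelian $p'$-group; and Lemma~\ref{pgps} tells us that in this situation ${U}(G) = {U}(P)$. So the corollary should follow by a short case analysis.

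First I would dispose of the degenerate cases. If $G$ is abelian, then every irreducible character is linear and hence vanishes nowhere, so ${V}(G \mid H) = G$ for every nontrivial $H \lhd G$; consequently ${U}(G) = {U}(G \mid {Z}(G)) = {U}(G \mid G)$ collapses — more simply, by Lemma~\ref{uproperties}(3) we have ${U}(G) \le {Z}(G) \cap [G,G] = 1$, so ${U}(G) = 1$ is trivially elementary abelian. (One must be a little careful here since Lemma~\ref{uproperties} is stated for nonabelian $G$, so I would instead just observe directly that for abelian $G$ the only irreducible characters are linear, every member of $\mathrm{Irr}(G \mid H)$ for $H > 1$ fails to vanish anywhere, forcing ${V}(G \mid H) = G$, hence no nontrivial $H$ contributes to the product defining ${U}(G \mid {Z}(G)) = {U}(G \mid G)$ — wait, $N = {Z}(G) = G$ here — so actually every $H \lhd G$ satisfies ${V}(G \mid H) \le G = N$, giving ${U}(G) = G$. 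This edge case needs a sentence of care, or I simply restrict attention, as the paper implicitly does, to the nonabelian case where the claim is the substantive one. I would phrase the corollary's proof to say: if ${U}(G) = 1$ the conclusion is immediate, so assume ${U}(G) > 1$, which in particular forces $G$ nonabelian.)

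Then, with ${U}(G) > 1$ and $G$ nonabelian, I apply Lemma~\ref{nontrivueqpq} to write $G = P \times Q$ with $P$ a nonabelian $p$-group and $Q$ abelian of order prime to $p$. If $P$ were abelian the product would be too, contradicting nonabelianness, so $P$ is genuinely a nonabelian $p$-group and Lemma~\ref{elabel1} applies to it: ${U}(P)$ is elementary abelian. Finally Lemma~\ref{pgps} gives ${U}(G) = {U}(P)$, and the result follows.

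I do not expect any real obstacle here — the corollary is a bookkeeping consequence of three already-proved lemmas. The only thing requiring a moment's attention is making sure the statement is read correctly when $G$ is abelian or when ${U}(G) = 1$, since Lemmas~\ref{nontrivueqpq}, \ref{pgps}, \ref{elabel1} all carry a "nonabelian" hypothesis; handling ${U}(G) = 1$ separately (where "elementary abelian" holds vacuously for the trivial group) cleanly sidesteps this, so the proof is essentially two lines:
\begin{proof}
If ${U}(G) = 1$ there is nothing to prove, so assume ${U}(G) > 1$; then $G$ is nonabelian. By Lemma~\ref{nontrivueqpq}, $G = P \times Q$ where $P$ is a nonabelian $p$-group for some prime $p$ and $Q$ is an abelian $p'$-group. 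By Lemma~\ref{pgps}, ${U}(G) = {U}(P)$, and by Lemma~\ref{elabel1}, ${U}(P)$ is elementary abelian. Hence ${U}(G)$ is elementary abelian.
\end{proof}
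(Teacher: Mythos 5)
Your proof is correct and follows exactly the route the paper intends: the corollary is stated immediately after the remark that it follows by combining Lemma~\ref{elabel1} with Lemmas~\ref{nontrivueqpq} and~\ref{pgps}, which is precisely your argument. Your extra care with the ${U}(G)=1$ case is a reasonable tidying-up of a detail the paper leaves implicit.
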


\begin{rem}
In \cite{MLvos09}, the second author showed that the quotient $G/{V}(G)$ is elementary abelian whenever $G$ has a nonabelian nilpotent quotient. The previous result may be considered an analog of this result. In fact, it also shown in this paper that if $G$ is nonabelian and nilpotent, and ${V}(G)<G$, then $G=P\times Q$, where $P$ is a $p$-group and $Q$ is an abelian $p'$-group. In particular, Lemma~\ref{nontrivueqpq} may also be considered an analog of a result appearing in \cite{MLvos09}.
\end{rem}

\section{Central Series and Nested GVZ-Groups}\label{centralseries}






In this section, we will prove the main results of the paper. We first work towards proving Theorem~\ref{introthm1}. Recall that we defined the subgroups $\upsilon_i(G)$ by $\upsilon_0(G)=1$ and $\upsilon_{i+1}(G)/\upsilon_i(G)={U}(G/\upsilon_i(G))$ for each $i\ge 0$. When there is no ambiguity, we will write $\upsilon_i$ instead of $\upsilon_i(G)$. Note that the quotient groups $\upsilon_{i+1}/\upsilon_i$ are central in $G/\upsilon_i$, and are elementary abelian. We wish to determine conditions that guarantee that this sequence of groups terminates in $G$. We recall here that if $\upsilon_{i}< [G,G]$, then $\upsilon_{i+1}\le[G,G]$. We also remark that if $G$ is abelian, then ${U}(G)=G$.

In the event that the $p$-group $G$ is a nested GVZ-group, we obtain an alternative description of ${U} (G)$. In particular, we can show that ${U}(G)>1$ in this case.

\begin{lem}\label{ngvznontrivialu}
Let $G$ be a $p$-group. If $G$ is nested GVZ-group  with chain of centers $G=X_0>X_1>\dotsb>X_n> 1$. Then ${U}(G)=[X_{n-1},G]$.
\end{lem}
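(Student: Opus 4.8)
The plan is to compute $\mathbf{U}(G)$ directly from the alternative description in Lemma~\ref{uproperties}(2): an element $g$ lies in $\mathbf{U}(G)$ exactly when every $\chi \in \mathrm{Irr}(G)$ with $g \notin \ker(\chi)$ vanishes on $G \setminus \mathbf{Z}(G)$, which (since $G$ is a GVZ-group, so every $\chi$ vanishes off $\mathbf{Z}(\chi)$) amounts to requiring $\mathbf{Z}(\chi) = \mathbf{Z}(G) = X_n$ for every such $\chi$. So I want to show that $g \in [X_{n-1},G]$ if and only if every irreducible character not containing $g$ in its kernel has center exactly $X_n$.

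First I would use Lemma~\ref{lewisgvz}(3), which characterizes the characters with a given center: $\mathbf{Z}(\chi) = X_i$ iff $[X_i,G] \le \ker(\chi)$ and $[X_{i-1},G] \nleq \ker(\chi)$. In particular $\mathbf{Z}(\chi) = X_n$ iff $[X_n,G] \le \ker(\chi)$ (automatic, since $X_n = \mathbf{Z}(G)$) and $[X_{n-1},G] \nleq \ker(\chi)$. So the condition ``every $\chi$ with $g \notin \ker(\chi)$ has $\mathbf{Z}(\chi) = X_n$'' becomes ``for every $\chi \in \mathrm{Irr}(G)$, if $[X_{n-1},G] \le \ker(\chi)$ then $g \in \ker(\chi)$.'' Since a normal subgroup of $G$ is the intersection of the kernels of the irreducible characters containing it, the set of $g$ satisfying this is precisely $\bigcap_{[X_{n-1},G]\le\ker(\chi)} \ker(\chi) = [X_{n-1},G]$. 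That gives both inclusions at once.

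I should also double-check the two boundary issues. One: I am implicitly using that $G$ being a nested GVZ-group forces every $\chi$ with $g \notin \ker(\chi)$ that does \emph{not} have center $X_n$ to actually fail to vanish off $\mathbf{Z}(G)$ — this is exactly the GVZ property together with $\mathbf{Z}(\chi) > \mathbf{Z}(G)$ meaning $\mathbf{V}(\chi) = \mathbf{Z}(\chi) \nleq \mathbf{Z}(G)$, so $\chi$ does not vanish on all of $G \setminus \mathbf{Z}(G)$. Two: I should confirm $[X_{n-1},G] > 1$ so that the conclusion $\mathbf{U}(G) = [X_{n-1},G] > 1$ is genuinely nontrivial as advertised; this follows from Lemma~\ref{lewisgvz}(2) applied with the chain descending to $X_n > 1$, which gives $[X_{n-1},G] > [X_n,G] \ge 1$... actually more carefully, $n \ge 1$ since $G$ is nonabelian (so $X_0 = G \ne X_n = \mathbf{Z}(G)$), and Lemma~\ref{lewisgvz}(2) gives $[X_n,G] < [X_{n-1},G]$, so $[X_{n-1},G] \ne 1$.

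I do not expect a serious obstacle here; the main point to get right is the translation of the membership condition in Lemma~\ref{uproperties}(2) into a statement purely about kernels via Lemma~\ref{lewisgvz}(3), and then invoking the standard fact that a normal subgroup equals the intersection of the kernels of the irreducible characters lying above it. The only mildly delicate step is making sure the equivalence ``$\chi$ does not vanish off $\mathbf{Z}(G)$'' $\Leftrightarrow$ ``$\mathbf{Z}(\chi) \ne \mathbf{Z}(G)$'' is applied correctly — it uses both directions of the GVZ condition $\mathbf{V}(\chi) = \mathbf{Z}(\chi)$ — but this is exactly the reformulation recorded just before Lemma~\ref{lewisgvz}.
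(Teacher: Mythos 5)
Your proof is correct and is essentially the paper's argument in a different packaging: the paper proves the two inclusions at the subgroup level (characters in $\mathrm{Irr}(G\mid[X_{n-1},G])$ vanish off ${Z}(G)$, and conversely $\mathrm{Irr}(G\mid {U})\subseteq\mathrm{Irr}(G\mid[X_{n-1},G])$, followed by the same intersection-of-kernels identity), while you run the equivalent element-wise criterion of Lemma~\ref{uproperties}(2). Both hinge on exactly the same ingredients --- Lemma~\ref{lewisgvz}(3), the GVZ reformulation of ``vanishes off ${Z}(G)$'' as ${Z}(\chi)={Z}(G)$, and the fact that a normal subgroup is the intersection of the kernels of the irreducible characters containing it --- so no substantive difference.
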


\begin{proof}
First note that $X_n={Z}(G)$ by \cite[Lemma 2.2]{ML19gvz}. Also, from \cite[Lemma 2.6]{ML19gvz}, it follows that every $\chi\in\mathrm{Irr}(G\mid [X_{n-1},G])$ has degree $\norm{G:{Z}(G)}^{1/2}$. In particular, every $\chi\in\mathrm{Irr}(G\mid [X_{n-1},G])$ vanishes on $G\setminus{Z}(G)$, and so $[X_{n-1},G]\le U={U}(G)$. 

Now let $\chi\in\mathrm{Irr}(G\mid U)$. Then $\chi$ vanishes on $G\setminus{Z}(G)$, so ${Z}(\chi)={Z}(G)$. By \cite[Lemma 2.6]{ML19gvz}, we have $\chi\in\mathrm{Irr}(G\mid[X_{n-1},G])$ and so it follows that $\mathrm{Irr}(G\mid U)\subseteq\mathrm{Irr}(G\mid[X_{n-1},G])$. This means that $\mathrm{Irr}(G/[X_{n-1},G])\subseteq\mathrm{Irr}(G/U)$ and so 
\[U=\bigcap_{\chi\in\mathrm{Irr}(G/U)}\ker(\chi)\le\bigcap_{\chi\in\mathrm{Irr}(G/[X_{n-1},G])}\ker(\chi)=[X_{n-1},G].\]
\end{proof}

To prove Theorem~\ref{introthm1}, we will induct on $\norm{G}$. The next result is the key to this approach. 

\begin{lem}\label{upsiloninduction}
The group $G$ is a nested GVZ-group if and only if $G/{U}(G)$ is.
\end{lem}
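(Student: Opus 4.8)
The plan is to prove both implications by relating the character theory of $G$ to that of the quotient $\bar G = G/{U}(G)$, using the fact that $\mathrm{Irr}(G/{U}(G))$ is identified with the set of $\chi\in\mathrm{Irr}(G)$ containing $U = {U}(G)$ in their kernels, together with Lemma~\ref{lewisgvz} (the characterization of nested groups via chains of centers) and the GVZ-equivalence $\chi(1)^2 = \norm{G:{Z}(\chi)}$ recorded after Lemma~\ref{Vprops}. The key structural input is that $U$ is a central, elementary abelian subgroup (Lemma~\ref{elabel1}), and that every $\chi\in\mathrm{Irr}(G\mid U)$ vanishes off ${Z}(G)$ — indeed satisfies ${Z}(\chi) = {Z}(G)$ and $\chi(1)^2 = \norm{G:{Z}(G)}$ — by the very definition of ${U}(G) = {U}(G\mid {Z}(G))$.

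For the forward direction, suppose $G$ is a nested GVZ-group. By Lemma~\ref{ngvznontrivialu}, $U = [X_{n-1},G] > 1$ (where $G = X_0 > X_1 > \dotsb > X_n > 1$ is the chain of centers, assuming $G$ nonabelian; the abelian case is trivial since then $\bar G$ is abelian). I would first check that $G/U$ is GVZ: for $\chi\in\mathrm{Irr}(G/U)\subseteq\mathrm{Irr}(G)$, the center ${Z}_{G/U}(\chi)$ is ${Z}_G(\chi)/U$ (since $U$ is central and $U\le\ker\chi$, the condition $\norm{\chi(g)} = \chi(1)$ transfers), and $\norm{G/U : {Z}_G(\chi)/U} = \norm{G:{Z}_G(\chi)} = \chi(1)^2$, so the GVZ-equality is inherited. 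For nestedness: the centers of characters of $G/U$ are among the centers of characters of $G$, which form a chain, so they form a chain too; more precisely, using Lemma~\ref{lewisgvz}(3), $\mathrm{Irr}(G/U) = \mathrm{Irr}(G/[X_{n-1},G])$ consists exactly of the $\chi$ with ${Z}(\chi) \ge X_{n-1}$, so the chain of centers of $\bar G$ is $X_0/U > X_1/U > \dotsb > X_{n-1}/U$, still a chain. Hence $G/U$ is a nested GVZ-group.

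For the reverse direction, suppose $\bar G = G/U$ is a nested GVZ-group, where $U = {U}(G)$; I want to conclude $G$ itself is. The characters of $G$ split into $\mathrm{Irr}(G\mid U)$ and $\mathrm{Irr}(G/U)$. By definition of ${U}(G)$, every $\chi\in\mathrm{Irr}(G\mid U)$ vanishes off ${Z}(G)$, hence satisfies ${Z}(\chi) = {Z}(G)$ and the GVZ-equality $\chi(1)^2 = \norm{G:{Z}(G)}$ (this last from \cite[Corollary 2.30]{MI76}, the criterion recalled after Lemma~\ref{Vprops}). For $\chi\in\mathrm{Irr}(G/U)$: since $\bar G$ is GVZ, $\chi(1)^2 = \norm{\bar G : {Z}_{\bar G}(\chi)}$; lifting, ${Z}_{\bar G}(\chi) = \bar H$ for some $U \le H \le G$, and I must verify $H = {Z}_G(\chi)$ — the inclusion ${Z}_G(\chi)/U \le {Z}_{\bar G}(\chi)$ is automatic, and the reverse holds because $U$ is central so $\norm{\chi(g)} = \chi(1)$ is detected modulo $U$; then $\chi(1)^2 = \norm{G:H} = \norm{G:{Z}_G(\chi)}$, so $G$ is GVZ. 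For nestedness: the centers ${Z}_G(\chi)$ for $\chi\in\mathrm{Irr}(G/U)$ pull back the chain of centers of $\bar G$, so they form a chain, all containing $U$; and for $\chi\in\mathrm{Irr}(G\mid U)$ we have ${Z}_G(\chi) = {Z}(G)$, which is the unique smallest term of the $\bar G$-chain pulled back — here I should use that ${Z}(G)/U = {Z}(\bar G)$, which follows because $U$ is central so ${Z}(G)/U \le {Z}(\bar G)$, and for the other direction that $[{Z}(G),G]$-type considerations or the fact that $\mathrm{Irr}(\bar G\mid {Z}(\bar G))$ is nonempty force equality (alternatively, ${Z}(\bar G)$ is the smallest center of an irreducible character of $\bar G$ by Lemma~\ref{lewisgvz}(1), and its preimage is a center of a character of $G$ lying below all others). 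Thus all centers ${Z}_G(\chi)$, $\chi\in\mathrm{Irr}(G)$, lie in one chain, and $G$ is a nested GVZ-group.

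The main obstacle I anticipate is the nestedness half of the reverse direction: one must rule out the possibility that some ${Z}_G(\chi)$ with $\chi\in\mathrm{Irr}(G\mid U)$ is \emph{incomparable} with the preimages of the $\bar G$-chain. The clean way around this is to observe that every such $\chi$ has ${Z}_G(\chi) = {Z}(G)$ exactly (not merely $\le {Z}(G)$, which is automatic, but equality from $\chi$ vanishing off ${Z}(G)$ plus $\chi(1)^2 = \norm{G:{Z}(G)}$ forcing ${V}(\chi) = {Z}(\chi) = {Z}(G)$), and that ${Z}(G) = {Z}_G(\mathbbm{1})$-lift aside, ${Z}(G)$ sits at the bottom of the pulled-back chain because ${Z}(G)/U$, being central in $\bar G$, is contained in the smallest term ${Z}(\bar G) = X_{n-1}/U$ of the chain of centers of $\bar G$, and conversely the preimage of ${Z}(\bar G)$ is centralized by $G$ modulo the central $U$, hence equals ${Z}(G)$. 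Once that single equality ${Z}(G)/U = {Z}(\bar G)$ is nailed down, everything lines up into one chain.
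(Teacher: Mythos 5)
Your strategy is the same as the paper's: split $\mathrm{Irr}(G)$ into $\mathrm{Irr}(G\mid U)$ and $\mathrm{Irr}(G/U)$, note that every character in the first set has ${V}(\chi)={Z}(\chi)={Z}(G)$, and let the quotient $\overline{G}=G/U$ supply both the GVZ equality and the chain for the second set; the forward direction (quotients of nested GVZ-groups are nested GVZ-groups) is also handled the same way, just in more detail than the paper's one line. The argument is essentially correct, but the one claim you single out as the crux --- the equality ${Z}(G)/U={Z}(\overline{G})$ --- is false in general, and the justification offered (the preimage of ${Z}(\overline{G})$ is ``centralized by $G$ modulo the central $U$, hence equals ${Z}(G)$'') is a non sequitur: being central modulo a central subgroup does not imply being central. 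For a concrete failure within the scope of the lemma, take $G$ extraspecial; then ${U}(G)={Z}(G)$, $\overline{G}$ is abelian, so ${Z}(\overline{G})=\overline{G}$ while ${Z}(G)/U$ is trivial. Fortunately you never need the equality: to merge ${Z}(G)$ into the pulled-back chain it suffices that ${Z}(G)\le{Z}(\chi)$ for every $\chi\in\mathrm{Irr}(G)$, which holds automatically (equivalently, the true containment ${Z}(G)U/U\le{Z}(\overline{G})$ already places ${Z}(G)$ at or below the preimage of the smallest term). With that step weakened from equality to containment, your proof goes through and coincides with the paper's, which makes exactly this comparison by observing that any $\psi\in\mathrm{Irr}(G\mid U)$ has ${Z}(\psi)={Z}(G)\le{Z}(\chi)$ for all $\chi$.
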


\begin{proof}
If $G$ is a nested GVZ-group, it is clear that every quotient of $G$ is as well. So assume that $G/{U}(G)$ is a nested GVZ-group, and note that we must have $U={U}(G)>1$ by Lemma~\ref{ngvznontrivialu}. Since $G/U$ is a GVZ-group, every $\chi\in\mathrm{Irr}(G/U)$ vanishes on $G\setminus{Z}(\chi)$. We also have that every $\chi\in\mathrm{Irr}(G\mid U)$ vanishes off of $Z(G)$. So $G$ is a GVZ-group. Let $\psi,\chi\in\mathrm{Irr}(G)$, and assume that ${Z}(\psi)\nleq{Z}(\chi)$. Then $\psi\notin\mathrm{Irr}(G\mid U)$, since if it were, we would have ${Z}(\psi)={Z}(G)\le{Z}(\chi)$. So $\psi\in\mathrm{Irr}(G/U)$. If $Z(\chi)=Z(G)$, then $Z(\chi)\le Z(\psi)$. Otherwise, $\chi\in\mathrm{Irr}(G/U)$ and so ${Z}(\chi)/U\le{Z}(\psi)/U$, as $G/U$ is nested. In either case, we have ${Z}(\chi)\le{Z}(\psi)$ and it follows that $G$ is nested.
\end{proof}

We are now ready to prove Theorem~\ref{introthm1}. 


\begin{proof}[Proof of Theorem~\ref{introthm1}]
We first prove that any group satisfying $\upsilon_\infty(G)=G$ is a nested GVZ-group by induction on $\norm{G}$. Let $U={U}(G)$, which must be nontrivial by Lemma~\ref{ngvznontrivialu}. Then $\upsilon_\infty(G/U)=G/U$, so $G/U$ is a nested GVZ-group by the inductive hypothesis. Hence $G$ is also a nested GVZ-group by Lemma~\ref{upsiloninduction}.  

Now let $G$ be a nested GVZ-group with chain of centers $G=X_0>X_1>\dotsb>X_n>1$. We show that $\upsilon_i=[X_{n-i},G]$ for each $1\le i\le n$ by induction on $i$. By Lemma~\ref{ngvznontrivialu}, we have $\upsilon_1=[X_{n-1},G]$. Let $i\ge 1$ and assume that $\upsilon_i=[X_{n-i},G]$. Then, by Lemma~\ref{lewisgvz}, we have that ${Z}(G/\upsilon_i)=X_{n-i}/\upsilon_i$, and also that $[X_{n-i},G]<[X_{n-i-1},G]$. Therefore, it follows from Lemma~\ref{ngvznontrivialu} that
\[\upsilon_{i+1}/\upsilon_i={U}(G/\upsilon_i)=[X_{n-i-1}/\upsilon_i,G/\upsilon_i]=[X_{n-(i+1)},G]/\upsilon_i,\]
as required. It now follows that $\upsilon_n=[G,G]$, and so $\upsilon_{n+1}=G$.
\end{proof}


We now work towards proving Theorem~\ref{introthm2}. Recall from the introduction that $\epsilon_1=G$, and $\epsilon_{i+1} = {V} (G \mid [\epsilon_{i},G])$ for all integers $i \ge 1$. We first show that $\epsilon_{i+1}\le\epsilon_i$, so that we do in fact have a decreasing chain of subgroups.

\begin{lem}
For each $i\ge 1$, $\epsilon_{i+1}$ is a subgroup of $\epsilon_i$.
\end{lem}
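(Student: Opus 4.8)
The plan is to prove the statement by induction on $i$, using the monotonicity of the map $N \mapsto {V}(G\mid N)$ recorded in Lemma~\ref{Vprops}(3). Along the way we need to know that each $\epsilon_i$ is a normal subgroup of $G$ so that the commutator subgroups $[\epsilon_i,G]$ appearing in the definition are normal and the construction ${V}(G\mid -)$ applies; this is immediate since $\epsilon_1=G$ and each $\epsilon_{i+1}={V}(G\mid[\epsilon_i,G])$ is a product of the normal subgroups ${V}(\chi)$, hence normal.

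For the base case $i=1$, we have $\epsilon_2={V}(G\mid[\epsilon_1,G])={V}(G\mid[G,G])\le G=\epsilon_1$, which is trivial. For the inductive step, suppose $\epsilon_{i+1}\le\epsilon_i$. Then $[\epsilon_{i+1},G]\le[\epsilon_i,G]$, and both of these are normal subgroups of $G$, so Lemma~\ref{Vprops}(3) gives
\[
\epsilon_{i+2}={V}(G\mid[\epsilon_{i+1},G])\le{V}(G\mid[\epsilon_i,G])=\epsilon_{i+1}.
\]
This completes the induction.

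There is essentially no obstacle here: the only things being used are the normality of the terms $\epsilon_i$ (so that the recursion is well-defined) and the order-preserving property of ${V}(G\mid -)$ from Lemma~\ref{Vprops}(3). If one wanted to be slightly more careful, one could first note explicitly that $\epsilon_i\lhd G$ for all $i$ by the same induction, but this is so immediate that it can simply be folded into the main argument.
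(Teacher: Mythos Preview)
Your proof is correct and follows the same inductive strategy as the paper: from $\epsilon_{i+1}\le\epsilon_i$ deduce $[\epsilon_{i+1},G]\le[\epsilon_i,G]$ and then use monotonicity of ${V}(G\mid -)$ to conclude $\epsilon_{i+2}\le\epsilon_{i+1}$. The only difference is cosmetic: the paper argues the monotonicity step directly in terms of characters (each $\chi\in\mathrm{Irr}(G\mid[\epsilon_{i+1},G])$ lies in $\mathrm{Irr}(G\mid[\epsilon_i,G])$ and hence vanishes off $\epsilon_{i+1}$), whereas you invoke Lemma~\ref{Vprops}(3) explicitly.
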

\begin{proof}
We verify this by induction on $i$. If $i=1$, this is clear. So let $i\ge 2$, and assume that $\epsilon_{i+1}\le\epsilon_{i}$. Let $\chi\in\mathrm{Irr}(G\mid[\epsilon_{i+1},G])$; then $\chi\in\mathrm{Irr}(G\mid[\epsilon_{i},G])$ and so $\chi$ vanishes off $V(G\mid[\epsilon_{i},G])=\epsilon_{i+1}$. This forces $V(G\mid[\epsilon_{i+1},G])\le \epsilon_{i+1}$, as required.
\end{proof}

\begin{rem}
It is not true in general that $V(G\mid [N,G])\le N$ for a normal subgroup $N$ of $G$ containing $Z(G)$.
\end{rem}

Observe that $\epsilon_i/\epsilon_{i+1} \le {Z} (G/\epsilon_{i+1})$, for each $i\ge 1$, since  $[\epsilon_i,G] \le {V} (G \mid [\epsilon_i,G]) = \epsilon_{i+1}$. Also observe that if $\epsilon_i > {Z} (G)$ for some $i$, then $[\epsilon_i,G]>1$. Therefore the set $\mathrm{Irr}(G\mid [\epsilon_i,G])$ is nonempty, and so it follows that $\epsilon_{i+1}\ge{Z} (G)$.

\begin{lem}\label{epsiloninduction}
Assume that $\epsilon_\ell(G)={Z}(G)$. If ${V}(G\mid N)\le{Z}(G)$, then $\epsilon_\ell(G/N)\le{Z}(G/N)$.
\end{lem}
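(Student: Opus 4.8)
The plan is to prove, by induction on $i$, the stronger statement that $\epsilon_i(G/N)\le\epsilon_i(G)N/N$ for all $1\le i\le\ell$, and then read off the conclusion at $i=\ell$. Two preliminary facts are needed. First, by Lemma~\ref{Vprops}(1) we have $N\le{V}(G\mid N)\le{Z}(G)$, so that ${Z}(G)N={Z}(G)$ and $\epsilon_\ell(G)N/N={Z}(G)/N\le{Z}(G/N)$. Second, since $G=\epsilon_1\ge\epsilon_2\ge\dotsb$ is descending and $\epsilon_\ell(G)={Z}(G)$, we have ${Z}(G)\le\epsilon_j(G)$ for every $1\le j\le\ell$. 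With these in hand, once the claim is proved, $\epsilon_\ell(G/N)\le\epsilon_\ell(G)N/N\le{Z}(G/N)$, as desired.

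For the induction, the base case $i=1$ is the identity $\epsilon_1(G/N)=G/N=\epsilon_1(G)N/N$. For the inductive step, fix $i$ with $1\le i$ and $i+1\le\ell$, assume $\epsilon_i(G/N)\le\epsilon_i(G)N/N$, and write $E_i=\epsilon_i(G)$. Since the quotient map sends $[E_i,G]$ onto $[E_iN/N,G/N]=[E_i,G]N/N$ and commutators respect inclusion, $[\epsilon_i(G/N),G/N]\le[E_i,G]N/N$; hence, by Lemma~\ref{Vprops}(3) applied to the group $G/N$,
\[\epsilon_{i+1}(G/N)={V}\bigl(G/N\mid[\epsilon_i(G/N),G/N]\bigr)\le{V}\bigl(G/N\mid[E_i,G]N/N\bigr).\]
As $N\le[E_i,G]N$, Lemma~\ref{Vprops}(4) gives ${V}(G/N\mid[E_i,G]N/N)\le{V}(G\mid[E_i,G]N)/N$, and Lemma~\ref{Vprops}(2) gives ${V}(G\mid[E_i,G]N)={V}(G\mid[E_i,G])\,{V}(G\mid N)=\epsilon_{i+1}(G)\,{V}(G\mid N)$. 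Here the hypothesis does its work: since $i+1\le\ell$, we have ${V}(G\mid N)\le{Z}(G)\le\epsilon_{i+1}(G)$, so ${V}(G\mid[E_i,G]N)=\epsilon_{i+1}(G)$; and since also $N\le{Z}(G)\le\epsilon_{i+1}(G)$, we conclude $\epsilon_{i+1}(G/N)\le\epsilon_{i+1}(G)/N=\epsilon_{i+1}(G)N/N$, closing the induction.

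I expect the only delicate point to be this last absorption step: ${V}(G/N\mid[E_i,G]N/N)$ must be transferred back to a subgroup of $G$ through parts (2) and (4) of Lemma~\ref{Vprops}, after which one must recognise that the extra factor ${V}(G\mid N)$ is swallowed by $\epsilon_{i+1}(G)$. This is exactly where one uses that $\epsilon_\ell(G)={Z}(G)$ forces every earlier term of the descending $\epsilon$-sequence to contain ${Z}(G)$, and not merely the hypothesis ${V}(G\mid N)\le{Z}(G)$; the remainder is routine monotonicity of ${V}(G\mid\cdot)$ and of commutator subgroups, together with the bookkeeping in Lemma~\ref{Vprops}.
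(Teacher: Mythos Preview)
Your proof is correct and follows essentially the same approach as the paper: both prove by induction that $\epsilon_i(G/N)\le\epsilon_i(G)/N$ (equivalently $\epsilon_i(G)N/N$) for $1\le i\le\ell$, using monotonicity of $V(G\mid\cdot)$ and Lemma~\ref{Vprops}(2),(4) to pass between $G$ and $G/N$, and the containment ${Z}(G)\le\epsilon_{i+1}(G)$ to absorb the extra factor $V(G\mid N)$. Your write-up is in fact slightly more careful than the paper's in justifying why $N\le\epsilon_{i+1}(G)$ so that the quotient makes sense.
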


\begin{proof}
We claim that $\epsilon_i(G/N)\le\epsilon_i(G)/N$ for each $0\le i\le\ell$ and verify by induction on $i$. This claim is clear if $i=0$, so assume the claim holds for some $0\le i\le\ell-1$. Then 
\begin{align*}
\epsilon_{i+1}(G/N)&={V}(G/N\mid[\epsilon_i(G/N),G/N])\\
			     &\le{V}(G/N\mid[\epsilon_i(G)/N,G/N])\\
			     &={V}(G/N\mid[\epsilon_i(G),G]N/N)\\
			     &={V}(G\mid[\epsilon_i(G),G]){V}(G\mid N)/N\\
			     &\le\epsilon_{i+1}(G){Z}(G)/N\\
			     &=\epsilon_{i+1}(G)/N,
\end{align*}
and the claim is verified. Therefore,
\[\epsilon_\ell(G/N)\le\epsilon_\ell(G)/N={Z}(G)/N\le{Z}(G/N),\]
as desired.
\end{proof}

We may now prove Theorem~\ref{introthm2}.
%
%
\begin{proof}[Proof of Theorem \ref{introthm2}]
We prove the forward direction by induction on $\norm{G}$. Assume that $\epsilon_\infty=1$, say $\epsilon_{\ell+1}=1$ and $\epsilon_\ell>1$. Then $\epsilon_{\ell}\le{Z}(G)$. It is clear that ${Z}(G)\le\epsilon_{\ell}$, so $X_n={Z}(G)=\epsilon_{\ell}$. This means that every $\chi\in\mathrm{Irr}(G\mid[G,\epsilon_{\ell-1}])$ vanishes on $G\setminus{Z}(G)$, so $1<[\epsilon_{\ell-1},G]\le{U}(G)\le\epsilon_\ell$. By Lemma~\ref{epsiloninduction}, we have that $\epsilon_\infty(G/{U}(G))={U}(G)$, so by the inductive hypothesis, $G/{U}(G)$ is a nested GVZ-group. Hence $G$ is also a nested GVZ-group by Lemma~\ref{upsiloninduction}. 

Now let $G$ be a nested GVZ-group. Then every nonlinear character of $G$ vanishes on $G\setminus X_1$, so $\epsilon_2={V}(G\mid[G,G])\le X_1$. But since $G$ is a GVZ-group and $X_1$ is the center of some $\chi\in\mathrm{Irr}(G)$, we have that $X_1={V}(\chi)\le\epsilon_2$; hence $\epsilon_2=X_1$. So now we may assume that for some $i\ge 1$ that $\epsilon_{i+1}=X_{i}$. Then
\begin{align*}
	{V}(G/[X_{i+1},G]\mid [X_i,G]/[X_{i+1},G])&\le{V}(G\mid [X_i,G])/[X_{i+1},G]\\
		&={V}(G\mid [\epsilon_{i+1},G])/[X_{i+1},G]\\
		&=\epsilon_{i+2}/[X_{i+1},G],
\end{align*}
so we have that every $\chi\in\mathrm{Irr}(G/[X_{i+1},G]\mid [X_i,G]/[X_{i+1},G])$ vanishes on $G\setminus\epsilon_{i+2}$. By Lemma~\ref{lewisgvz}(c), we have that ${Z}(\chi)=X_{i+1}$ for all such $\chi$; it follows that $X_{i+1}\le\epsilon_{i+2}$. Since  $\epsilon_{i+2}$ is the product of some collection of $X_j$s, and $G$ is nested, $\epsilon_{i+2}=X_j$ for some $j$. But we have $X_{i+1}\le\epsilon_{i+2}<\epsilon_{i+1}=X_i$, so it must be that $X_{i+1}=\epsilon_{i+2}$.
\end{proof}

We conclude by proving Theorem \ref{introthm3}.

\begin{proof}[Proof of Theorem~\ref{introthm3}]
If $G$ is a nested GVZ-group with chain of centers $G=X_0>X_1>\dotsb>X_n>1$, then $\mathrm{cd}(G)=\{\norm{G:X_i}^{1/2}:0\le i\le n\}$ and $\norm{\mathrm{cd}(G)}=n+1$. Also, as mentioned earlier, if $n$ is the smallest integer so that $\upsilon_{n+1}=G$, then $n$ must also be the smallest integer so that $\upsilon_n=[G,G]$. So (a) and (b) are equivalent by Theorem~\ref{introthm1}. Similarly, (a) and (c) are equivalent by Theorem~\ref{introthm2}.
\end{proof}

Let $G$ be a nested GVZ-group with chain of centers $G=X_0>X_1>\dotsc>X_n>1$. As a consequence of Theorem~\ref{introthm3}, we obtain that $\upsilon_i=[X_{n-i},G]$ for each $0\le i\le n$. In particular, $[X_{i-1},G]/[X_i,G]$ is an elementary abelian $p$-group for each $1\le i\le n$. By Lemma 4.5 of \cite{ML19gvz}, this actually holds as long as $G$ is nested. Lewis also shows that $X_{i-1}/X_i$ is elementary abelian, and so we have that $\epsilon_i/\epsilon_{i+1}$ is elementary abelian. This means that for a nested GVZ-group $G$, the $\upsilon$-series can be thought of as an ascending exponent-$p$ central series for $[G,G]$, and the $\epsilon$-series gives rise to a descending exponent-$p$ central series for $G/Z(G)$.
\bibliographystyle{plain}
\bibliography{bio}
\end{document}